\DeclareMathOperator{\loc}{loc}
\DeclareMathOperator{\osc}{osc}
\newcommand{\R}{\mathbb{R}}
\newcommand{\norm}[1]{\left\lVert#1\right\rVert}
\newcommand{\inner}[2]{\left\langle #1, #2 \right\rangle}
\newcommand{\I}{\int\limits}
\theoremstyle{plain}
\newtheorem{thm}{Theorem}[section]
\newtheorem{lem}{Lemma}[section]
\theoremstyle{definition}
\newtheorem{defn}{Definition}[section]
\theoremstyle{remark}
\let\orgdescriptionlabel\descriptionlabel
\renewcommand*{\descriptionlabel}[1]{%
	\let\orglabel\label
	\let\label\@gobble
	\phantomsection
	\edef\@currentlabel{#1}%
	\let\label\orglabel
	\orgdescriptionlabel{#1}%
}
\numberwithin{equation}{section}
\begin{document}
	
\title[]{$C^{1,\alpha}$-regularity for a class of degenerate/singular fully nonlinear elliptic equations}

\everymath{\displaystyle}

\author{Sumiya Baasandorj}
\address{Department of Mathematical Sciences, Seoul National University, Seoul 08826, Korea}
\email{summa2017@snu.ac.kr}
\thanks{This work was supported by NRF-2021R1A4A1027378.}

\author{Sun-Sig Byun}
\address{Department of Mathematical Sciences and Research Institute of Mathematics,
Seoul National University, Seoul 08826, Korea}
\email{byun@snu.ac.kr}

\author{Ki-Ahm Lee}
\address{Department of Mathematical Sciences and Research Institute of Mathematics,
Seoul National University, Seoul 08826, Korea}
\address{Center for Mathematical Challenges, Korea Institute for Advanced Study, Seoul 02455, Korea}
\email{kiahm@snu.ac.kr}

\author{Se-Chan Lee}
\address{Department of Mathematical Sciences, Seoul National University, Seoul 08826, Korea}
\email{dltpcks1@snu.ac.kr}


\begin{abstract}
	We establish an optimal $C^{1,\alpha}$-regularity for viscosity solutions of degenerate/singular fully nonlinear elliptic equations by finding minimal regularity requirements on the associated operator.
\end{abstract}

\keywords{Fully nonlinear degenerate/singular equations; Regularity in H\"older spaces; viscosity solutions}
\subjclass[2010]{Primary  35B65; Secondary 35J60, 35J70, 35D40.}
\maketitle
\tableofcontents

\section{Introduction}

In this paper we provide a unified way for proving H\"older regularity for the gradient of viscosity solutions to fully nonlinear elliptic equations of the form
\begin{align}
    \label{me}
    \begin{split}
    \Phi(x,|Du|) F(D^{2}u) = f(x)
    \quad\text{in}\quad B_{1},
    \end{split}
\end{align}
where $B_{1}\equiv B_{1}(0)\subset \R^{n}$ with $n\geqslant 2$ is the unit ball, $F : \mathcal{S}(n)\rightarrow \R$ is a uniformly $(\lambda,\Lambda)$-elliptic operator in the sense of \ref{a1} and $\Phi : B_{1}\times [0,\infty)\rightarrow [0,\infty)$ is a continuous map featuring a degeneracy and singularity for the gradient described as in \ref{a2}. From a variational point of view, the fully nonlinear equation \eqref{me} is closely related to the energy functional 
\begin{align}
	\label{func}
	v\mapsto \I_{B_{1}} \varphi(x,|Dv|)\,dx
\end{align}
for a integral density $\varphi : B_{1}\times [0,\infty)\rightarrow [0,\infty)$ in a way that the Euler-Lagrange equation corresposnding to the functional \eqref{func} forms an equation of type \eqref{me}.
 The functional in \eqref{func} is a highly general non-autonomous functional with Uhlenbeck structure including significant models such as $p-$, Orlicz-, $p(x)-$ and double phase- growth and so on. H\"older continuity for the gradient of local minima of the functional \eqref{func} under suitable optimal assumptions has been investigated in \cite{HO}, where fundamental assumptions on the integral density function $\varphi$ in \eqref{func} are that there exist constants $1<p,q$ such that the map $t\mapsto \frac{\varphi(x,t)}{t^{p}}$ is almost non-decreasing and the map $t\mapsto \frac{\varphi(x,t)}{t^{q}}$ is almost non-increasing, see \cite[Definition 3.1]{HO}. In this regard, our conditions on $\Phi$ in \eqref{me} to be introduced in \ref{a2} are absolutely reasonable. Let us present known regularity results for viscosity solutions of equations in the form of \eqref{me} as significant special cases of our problem. 

\begin{enumerate}
	\item[1.] For $\Phi(x,t) = t^{p}$ with $i(\Phi) = s(\Phi)=p>-1$ in condition \ref{a2}, fully nonlinear equations \eqref{me} with this type of $\Phi(x,t)$ have  been studied in a series of papers. The authors of \cite{BD1} proved the comparison principle and Liouville type theorems in the singular case $(-1<p<0)$, and showed the regularity and uniqueness of the first eigenfunction in \cite{BD2}. Alexandrov-Bakelman-Pucci estimates and the Harnack inequality have been also obtained in \cite{DFQ1,DFQ2,Im1}. In particular, the authors of \cite{IS1} proved local H\"older continuity for the gradient of viscosity solutions of \eqref{me} in the degenerate case ($p\geqslant 0$). Moreover, the authors of \cite{ART1} proved the optimality of H\"older regularity for the gradient of viscosity solutions for the same problem in \cite{IS1} by showing that viscosity solutions are $C_{\loc}^{1,\beta}$ with $\beta=\min\left\{\bar{\alpha}, \frac{1}{p+1}\right\}$, where $\bar{\alpha}\in (0,1)$ is the H\"older exponent coming from the Krylov-Safonov regularity for the homogeneous equation $F(D^2h)=0$. 
	\item[2.] For $\Phi(x,t) = t^{p} + a(x)t^{q}$ with $-1<p, q$ and $0\leqslant a(\cdot)\in C(B_{1})$, the constants in \ref{a2} can be determined as $i(\Phi)= \min\{p,q\} $ and $s(\Phi)= \max\{p,q\}$. The author of \cite{De1} proved the local $C^{1,\beta}-$regularity of viscosity solutions of \eqref{me} for $0\leqslant p \leqslant q$. Moreover, in this degenerate case, the sharpness of the local $C^{1,\beta}$-regularity estimates for bounded viscosity solutions is shown in \cite{SR1}. 
	\item[3.] For $\Phi(x,t) = t^{p(x)}$ with $p(\cdot)\in C(B_{1})$, $i(\Phi)=\inf\limits_{x\in B_{1}}p(x)>-1$ and $s(\Phi) = \sup\limits_{x\in B_{1}}p(x)$ in \ref{a2}, $C^{1,\beta}$-regularity of viscosity solutions has been studied in \cite{BPRT1}. In this paper, we provide a novel  way to prove H\"older continuity for the gradient of  viscosity solutions of \eqref{me} for both degenerate/singular cases in the full generality.
	\item[4.] For  $\Phi(x,t)= t^{p(x)} + a(x) t^{q(x)}$ with functions $0\leqslant a(\cdot)\in C(B_{1})$ and $-1< p(\cdot), q(\cdot)$ in $C(B_{1})$, the constants in \ref{a2} are $i(\Phi)=\inf\limits_{x\in B_{1}}\{p(x),q(x)\} $ and $ s(\Phi)= \sup\limits_{x\in B_{1}} \{p(x),q(x)\}$. In \cite{FRZ1}, local H\"older continuity for the gradient has been proved when $0\leqslant p(\cdot)\leqslant q(\cdot)$.  
\end{enumerate}

For a variational point of these special cases we have discussed above, we refer to the recent survey paper \cite{MR1} presenting important results in problems with nonstandard growth conditions. We also point out the very recent paper \cite{Je1} dealing with  viscosity solutions of an equation of the form 
\begin{align}
	\label{eq:va}
	|Du|^{\beta(x,u,Du)}F(D^2u) = f(x)\quad\text{in}\quad B_{1},
\end{align}
where $\beta :  B_{1}\times\R\times \R^{n}\rightarrow \R$ is a map satisfying $0<\beta
_{m}\leqslant \beta(\cdot)\leqslant \beta_{M}$ for some positive constants $\beta_{m}$ and $\beta_{M}$. In \cite{Je1},  local H\"older continuity for the gradient of viscosity solutions of \eqref{eq:va} is obtained under general conditions on the exponent function $\beta(\cdot)$ for the degenerate case, while the singular case is not be treated due to the methods employed there and the equation \eqref{me} can not be represented as \eqref{eq:va} in general.

Finally, let us recall a consequence of the classical Krylov-Safonov Harnack inequality, see \cite{CC1}, that viscosity solutions to the homogeneous equation 
\begin{align}
	\label{eq:hom}
	F(D^2h) = 0 \quad\text{in}\quad B_{1},
\end{align}
under the assumption that $F : \mathcal{S}(n)\rightarrow \R$ satisfies \ref{a1}, are locally of class $C^{1,\bar{\alpha}}(B_{1})$ for a universal constant $\bar{\alpha}\equiv \bar{\alpha}(n,\lambda,\Lambda)\in (0,1)$ with the estimate 
\begin{align}
	\label{alpha}
	\norm{h}_{C^{1,\bar{\alpha}}(B_{1/2})} \leqslant c \norm{h}_{L^{\infty}(B_{1})}
\end{align}
for some constant $c\equiv c(n,\lambda,\Lambda)$.
The main results of this paper read as follows.

\begin{thm}[H\"older continuity of the gradient]
	\label{thm:mthm}
	Let $u\in C(B_{1})$ be a viscosity solution of \eqref{me} under the assumptions \ref{a1}-\ref{a3}. Then $u\in C^{1,\beta}_{\loc}(B_{1})$ for all $\beta>0$ satisfying 
	\begin{align}
		\label{thm:mthm:1}
		\beta < 
		\begin{cases} \min\left\{\bar{\alpha}, \frac{1}{1+ s(\Phi)}\right\} & \mbox{if } i(\Phi)\geqslant 0,\\ 
		\min\left\{\bar{\alpha}, \frac{1}{1+ s(\Phi)-i(\Phi)}\right\} & \mbox{if } -1<i(\Phi)<0, \end{cases}
	\end{align}
	where $\bar{\alpha}$ is given in \eqref{alpha}. Moreover, for every $\beta$ in \eqref{thm:mthm:1}, there exists a constant $c\equiv c(n,\lambda,\Lambda,i(\Phi),L,\beta)$ such that
	\begin{align}
		\label{thm:mthm:2}
		\norm{u}_{L^{\infty}(B_{1/2})} + \sup\limits_{x\neq y\in B_{1/2}} \frac{|Du(x)-Du(y)|}{|x-y|^{\beta}} \leqslant
		c\left( 1 + \norm{u}_{L^{\infty}(B_{1})} + \norm{\frac{f}{\nu_0}}_{L^{\infty}(B_{1})}^{\frac{1}{1+ i(\Phi)}} \right).
	\end{align}
\end{thm}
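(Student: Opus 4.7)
The overall strategy is the classical perturbative scheme for fully nonlinear equations, adapted to the non-standard growth nonlinearity $\Phi$. By the standard normalisation --- dividing $u$ by $K := 1 + \|u\|_{L^{\infty}(B_{1})} + \|f/\nu_{0}\|_{L^{\infty}(B_{1})}^{1/(1+i(\Phi))}$ and rescaling the ball --- one may assume $\|u\|_{L^{\infty}(B_{1})}\leq 1$ and $\|f/\nu_{0}\|_{L^{\infty}(B_{1})}\leq \delta$ for a small universal $\delta>0$ to be chosen later. The theorem then reduces to producing, at each $x_{0}\in B_{1/2}$ and each radius $r\in(0,1/2)$, an affine function $\ell_{x_{0},r}$ such that
\begin{equation*}
\sup_{B_{r}(x_{0})}|u-\ell_{x_{0},r}|\leq c\, r^{1+\beta},
\end{equation*}
from which \eqref{thm:mthm:2} follows by a standard Campanato-type characterisation of $C^{1,\beta}$.

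The core mechanism is an \emph{approximation lemma}: for every $\varepsilon>0$ there exists $\delta>0$, depending only on the data in \ref{a1}--\ref{a3}, such that whenever $u$ is a normalised viscosity solution of \eqref{me} one can find $h$ solving $F(D^{2}h)=0$ in $B_{3/4}$ with $\|u-h\|_{L^{\infty}(B_{3/4})}\leq\varepsilon$. I would prove this by contradiction via a stability/compactness argument, exploiting the ABP and Harnack-type machinery already available for this class (cf.\ \cite{IS1,Im1,DFQ1,DFQ2}) together with the two-sided control on $\Phi$ from \ref{a2}, which allows passage to the limit in a sequence of counterexamples. Coupling this lemma with the Krylov--Safonov estimate \eqref{alpha} applied to the limit $h$ yields the one-step improvement: for a universal $\rho\in(0,1/4)$ and an affine $\ell$ with controlled coefficients,
\begin{equation*}
\sup_{B_{\rho}}|u-\ell|\leq \rho^{1+\beta}.
\end{equation*}
The iteration is then standard: rescale $v_{k}(x):= [u(\rho^{k}x)-\ell_{k}(\rho^{k}x)]/\rho^{k(1+\beta)}$, check that $v_{k}$ satisfies an equation of the same structural form with the same ellipticity constants of $F$ and a rescaled $\widetilde{\Phi}_{k}$, and reapply the one-step improvement to update $\ell_{k}\mapsto \ell_{k+1}$. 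The limits of the coefficients of $\ell_{k}$ produce the tangent plane and the Campanato decay.

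The main obstacle, and the source of the exponent dependence in \eqref{thm:mthm:1}, is carrying out this iteration uniformly through the natural dichotomy on the slope of $\ell_{k}$. When $|D\ell_{k}|$ is small, the rescaled problem for $v_{k}$ keeps $\widetilde{\Phi}_{k}$ controlled because $i(\Phi)>-1$, and the approximation lemma reapplies immediately with decay governed by $\bar{\alpha}$. When $|D\ell_{k}|$ is large one must re-centre, and here the upper growth exponent $s(\Phi)$ enters in the degenerate regime $i(\Phi)\geq 0$, producing the denominator $1+s(\Phi)$; in the singular regime $-1<i(\Phi)<0$, the negative lower power $i(\Phi)$ additionally distorts the rescaling of $\widetilde{f}_{k}$, and balancing the two estimates forces the denominator $1+s(\Phi)-i(\Phi)$. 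The technical heart of the argument will be to keep $\widetilde{f}_{k}$ below the threshold $\delta$ of the approximation lemma uniformly across all scales, especially in the singular case where $\Phi(x,|Du|)$ blows up near the zero-gradient set and viscosity arguments require extra care; once this uniformity is secured, the quantitative estimate \eqref{thm:mthm:2} follows by tracking the constants through the iteration.
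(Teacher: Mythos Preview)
Your overall scheme---normalisation, compactness-based approximation lemma, iterated affine correction---matches the paper's proof in the degenerate case $i(\Phi)\geqslant 0$. One clarification: the paper does not run a dichotomy on $|D\ell_k|$ inside the iteration. Instead, the approximation lemma (their Lemma~\ref{lem:AL}) is proved \emph{uniformly in the shift vector $\xi$}, so that the rescaled function $v_k$---which solves a $\xi$-shifted equation with $\xi_k = r^{-k\beta}b_k$, possibly unbounded---falls directly under the lemma at every step without case distinction. The large/small-$|\xi|$ split you have in mind appears only earlier, in the compactness input: uniform H\"older/Lipschitz bounds for solutions of the shifted equation are obtained via Ishii--Lions when $|\xi|$ is large and via the Imbert--Silvestre large-gradient H\"older theorem when $|\xi|\leqslant A_0$ (not via ABP/Harnack as you suggest). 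The exponent $1/(1+s(\Phi))$ then comes purely from the rescaling of $f$, namely $\|f_m\|_{L^\infty}\leqslant L\delta\, r^{m(1-(1+s(\Phi))\beta)}$.

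For the singular case $-1<i(\Phi)<0$, your ``balancing'' sketch diverges from the paper and is where a direct attempt would run into trouble. The paper does \emph{not} iterate in the singular regime at all. It first extracts a Lipschitz bound on $u$ (here the singularity helps, via Ishii--Lions with $\xi=0$), and then observes that $u$ is a viscosity solution of
\[
\tilde\Phi(x,|Du|)\,F(D^2u)=\tilde f(x),\qquad \tilde\Phi(x,t):=t^{-i(\Phi)}\Phi(x,t),\quad \tilde f(x):=|Du(x)|^{-i(\Phi)}f(x),
\]
which is a \emph{degenerate} equation with $i(\tilde\Phi)=0$, $s(\tilde\Phi)=s(\Phi)-i(\Phi)$, and $\tilde f$ bounded by the Lipschitz estimate. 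Part~1 then applies verbatim, and the denominator $1+s(\Phi)-i(\Phi)$ falls out automatically. Your direct route would require an approximation lemma valid when $\Phi(x,t)\to\infty$ as $t\to 0$, where the compactness and the viscosity-inequality manipulations are considerably more delicate; the paper sidesteps all of this with the multiplication trick.
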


The results of Theorem \ref{thm:mthm} are sharp in the view of an example given in \cite{IS1}. As we have discussed above, the results of Theorem \ref{thm:mthm} cover the main results of the papers \cite{BPRT1, FRZ1, De1, IS1} for both cases involving degenerate/singular terms in a unified way. Moreover, the results of Theorem \ref{thm:mthm} cover another important cases such as 

\begin{enumerate}
	\item[1.] $\Phi(x,t)=t^{p} + a(x)t^{p}\log(e+t)$ with $-1<p$ and $0\leqslant a(\cdot)\in C(B_{1})$, where the constants in \ref{a2} are given by $i(\Phi)=p$ and $s(\Phi)=p+\varepsilon$ for any $\varepsilon>0$,
	\item[2.] $\Phi(x,t) = \phi(t) + a(x)\psi(t) $ for suitable $N$-functions $\phi$, $\psi$ and $0\leqslant a(\cdot)\in C(B_{1})$.
\end{enumerate}
We again refer to \cite{MR1} for related variational problems.

Finally, we outline the organization of the paper. In the next section we provide basic notations and assumptions to be used, and also smallness regime and basic regularity results. In Section \ref{sec3}, we prove basic regularity properties of viscosity solutions of \eqref{xi-eq} depending on $i(\Phi)$ and the size of the quantity $|\xi|$. Section \ref{sec4} is devoted to the approximation procedure for viscosity solutions of \eqref{me}. Finally, in last section we provide the proof of our main Theorem \ref{thm:mthm}.


\section{Preliminaries}
\label{sec2}
\subsection{Notions and assumptions}
\label{sec2-1}
Throughout the paper, we denote by $B_{r}(x_0):= \{x\in \R^{n} : |x-x_0|< r\}$ the open ball of $\R^{n}$ with $n\geqslant 2$ centered at $x_0$ with positive radius $r$. If the center is clear in the context, we shall omit the center point by writing $B_{r}\equiv B_{r}(x_0)$. Also $B_{1}\equiv B_{1}(0)\subset \R^{n}$ denote the unit ball. We shall always denote by $c$ a generic positive constant, possible varying line to line, having dependecies on parameters using brackets, that is, for example $c\equiv c(n,i(\Phi),\nu_0)$ means that $c$ depends only on $n,i(\Phi)$ and $\nu_0$. For a measurable map $g : \mathcal{B}\subset B_{1}\rightarrow \R^{N}$ $(N\geqslant 1)$ with $\beta\in (0,1]$ being a given number, we shall use the notation
\begin{align*}
	[g]_{C^{0,\beta}(\mathcal{B})}:= \sup\limits_{x\neq y\in \mathcal{B}} \frac{|g(x)-g(y)|}{|x-y|^{\beta}},\quad
	[g]_{C^{0,\beta}}:= [g]_{C^{0,\beta}(B_{1})}.
\end{align*}

Now we state the main assumptions in the paper.

\begin{description}
	\item[(A1)\label{a1}] The operator $F : \mathcal{S}(n)\rightarrow \R$ in \eqref{me} is continuous and uniformly $(\lambda,\Lambda)$-elliptic in the sense that
	\begin{align*}
		\lambda \text{tr}(N) \leqslant F(M)-F(M+N) \leqslant \Lambda \text{tr}(N)
	\end{align*}
	holds with some constants $0<\lambda\leqslant \Lambda$, whenever $M,N\in \mathcal{S}(n)$ with $N\geqslant 0$, where we denote by $\mathcal{S}(n)$ to mean the set of $n\times n$ real symmetric matrices.
	\item[(A2)\label{a2}] $\Phi : B_{1}\times [0,\infty)\rightarrow [0,\infty) $ is a continuous map satisfying the following properties: 
	\begin{enumerate}
		\item[1.] There exist constants $ s(\Phi)\geqslant i(\Phi)>-1$ such that the map $\displaystyle t\mapsto \frac{\Phi(x,t)}{t^{i(\Phi)}}$ is almost non-decreasing with constant $L\geqslant 1$ in $(0,\infty)$ and the map $\displaystyle t\mapsto \frac{\Phi(x,t)}{t^{s(\Phi)}}$ is almost non-increasing with constant $L\geqslant 1$ in $(0,\infty)$ for all $x\in B_{1}$.
		\item[2.] There exists constants $0<\nu_0\leqslant \nu_1$ such that $\displaystyle \nu_{0} \leqslant  \Phi(x,1) \leqslant \nu_{1}$ for all $x\in B_{1}$.
	\end{enumerate}
	\item[(A3)\label{a3}] The term $f$ on the right hand side of \eqref{me} belongs to $C(B_{1})\cap L^{\infty}(B_{1})$.
\end{description}

The Pucci extremal operators $P_{\lambda,\Lambda}^{\pm} : \mathcal{S}(n)\rightarrow \R$ are defined as 
\begin{align*}
	P_{\lambda,\Lambda}^{+}(M):= -\lambda\sum\limits_{\lambda_{k}>0} \lambda_{k} - \Lambda\sum\limits_{\lambda_{k}<0}\lambda_{k}
\end{align*}
and
\begin{align*}
	P_{\lambda,\Lambda}^{-}(M):= -\Lambda\sum\limits_{\lambda_{k}>0} \lambda_{k} -\lambda\sum\limits_{\lambda_{k}<0}\lambda_{k},
\end{align*}
where $\{\lambda_{k}\}_{k=1}^{n}$ are the eigenvalues of the matrix $M$. The $(\lambda,\Lambda)$-ellipticity of the operator $F$ via the Pucci extremal operators can be formulated as 
\begin{align*}
	P_{\lambda,\Lambda}^{-}(N) \leqslant F(M+N)-F(M) \leqslant P_{\lambda,\Lambda}^{+}(N)
\end{align*}
for all $M,N\in \mathcal{S}(n)$.

In what follows, for any vector $\xi\in\R^{n}$, we define a map $G_{\xi} : B_{1}\times \R^{n}\times \mathcal{S}(n)\rightarrow \R$ by
\begin{align}
	\label{g-fun}
	G_{\xi}(x,p,M): = \Phi(x,|\xi+p|)F(M)-f(x)
\end{align}
under the assumptions prescribed in \ref{a1}-\ref{a3}.
Then we shall focus on viscosity solutions of the equation
\begin{align}
	\label{xi-eq}
	G_{\xi}(x,Du,D^2u)=0 \text{ in } B_{1}.
\end{align}

Now we give the definition of a viscosity solution $u$ of the equation \eqref{xi-eq} as follows.

\begin{defn}
	A lower semicontinuous function $v$ is called a viscosity supersolution of \eqref{xi-eq} if for all $x_0\in B_{1}$ and $\varphi\in C^{2}(B_{1})$ such that $v-\varphi$ has a local minimum at $x_0$ and $D\varphi(x_0)\neq 0$, then 
	\begin{align*}
		G_{\xi}(x_{0},D\varphi(x_0),D^{2}\varphi(x_0))\geqslant 0.
	\end{align*}
	An upper semicontinuous function $w$ is called is a viscosity subsolution of \eqref{xi-eq} if for all $x_0\in B_{1}$ and $\varphi\in C^{2}(B_{1})$ such that $w-\varphi$ has a local maximum at $x_0$ and $D\varphi(x_0)\neq 0$, there holds 
	\begin{align*}
		G_{\xi}(x_{0},D\varphi(x_0),D^{2}\varphi(x_0))\leqslant 0.
	\end{align*}
	We say that $u\in C(B_{1})$ is a viscosity solution of \eqref{xi-eq} if $u$ is a viscosity supersolution and a subsolution simultaneously.
\end{defn}

Also we recall a concept of superjet and subjet introduced in \cite{CIL1}.
\begin{defn}
	Let $v: B_{1}\rightarrow \R$ be an upper semicontinuous function and $w:B_{1}\rightarrow \R$ be a lower semicontinuous function.
	\begin{enumerate}
		\item[1.] A couple $(p,M)\in \R^{n}\times\mathcal{S}(n)$ is a superjet of $v$ at $x\in B_{1}$ if 
		\begin{align*}
			v(x+y) \leqslant v(x) + \inner{p}{y} + \frac{1}{2}\inner{My}{y} + O(|y|^2).
		\end{align*}
		\item[2.] A couple $(p,M)\in \R^{n}\times \mathcal{S}(n)$ is a subjet of $w$ at $x\in B_{1}$ if 
		\begin{align*}
			w(x+y)\geqslant w(x) + \inner{p}{y} + \frac{1}{2}\inner{My}{y} + O(|y|^2).
		\end{align*}
		\item[3.] A couple $(p,M)\in \R^{n}\times \mathcal{S}(n)$ is a limiting superjet of $v$ at $x\in B_{1}$ if there exists a sequence $\{x_k,p_{k},M_{k}\}\rightarrow \{x,p,M\}$ as $k\rightarrow \infty$  in a such way that $\{p_{k},M_{k}\}$ is a superjet of $v$ at $x_{k}$ and $\lim\limits_{k\to\infty}v(x_{k}) = v(x)$.
		\item[4.] A couple $(p,M)\in \R^{n}\times \mathcal{S}(n)$ is a limiting subjet of $w$ at $x\in B_{1}$ if there exists a sequence $\{x_k,p_{k},M_{k}\}\rightarrow \{x,p,M\}$ as $k\rightarrow \infty$  in such a way that $\{p_{k},M_{k}\}$ is a subjet of $v$ at $x_{k}$ and $\lim\limits_{k\to\infty}w(x_{k}) = w(x)$.
	\end{enumerate}
\end{defn}

\subsection{Small regime}
\label{sec2-2}

Here we verify that, for a viscosity solution $u$ of \eqref{xi-eq}, we are able to assume 
\begin{align}
	\label{small}
	\osc\limits_{B_{1}} u \leqslant 1
	\quad\text{and}\quad
	\norm{f}_{L^{\infty}(B_{1})} \leqslant \varepsilon_{0}
\end{align}
for some constant $0<\varepsilon_{0}<1$ small enough, and also $\nu_0=\nu_1=1$ without loss of generality.
In order to consider the problem in a small regime as in \eqref{small}, for a fixed ball $B_{R}(x_0)\subset B_{1}$, we define $\bar{u} : B_{1}\rightarrow \R$ by
\begin{align}
	\label{sr:1}
	\bar{u}(x):= \frac{u(x_0+Rx)}{K}
\end{align}
for positive constants $K\geqslant 1\geqslant R$ to be determined later. It can be seen that $\bar{u}$ is a viscosity solution of 
\begin{align}
	\label{sr:2}
	\bar{G}_{\bar{\xi}}(x,D\bar{u},D^2\bar{u}):= \bar{\Phi}(x,|\bar{\xi}+D\bar{u}|)\bar{F}(D^{2}\bar{u})-\bar{f}(x) = 0,
\end{align}
where 
\begin{align*}
	\displaystyle
	\bar{\Phi}(x,t) &:= \frac{\Phi\left(x_0+Rx,\frac{K}{R}t\right)}{\Phi\left(x_0+Rx,\frac{K}{R}\right)}, \\
	\bar{F}(M)&:= \frac{R^{2}}{K}F\left(\frac{K}{R^{2}}M \right),
	\\
	\bar{f}(x) &:= \frac{R^{2}}{\Phi\left(x_0+Rx,\frac{K}{R}\right)K}f(x_0+Rx) \text{ and } \bar{\xi}:= \frac{R}{K}\xi.
\end{align*}

Note that $\bar{F}$ is still a uniformly $(\lambda,\Lambda)$-elliptic operator, the map $\displaystyle t\mapsto \frac{\bar{\Phi}(x,t)}{t^{i(\Phi)}}$ is almost non-decreasing and the map $\displaystyle t\mapsto \frac{\bar{\Phi}(x,t)}{t^{s(\Phi)}}$ is almost non-increasing with the same constants $L\geqslant 1$ and $s(\Phi)\geqslant i(\Phi)>-1$ as in \ref{a2}, and $\bar{\Phi}(x,1)=1$ for all $x\in B_{1}$. Moreover, the assumption \ref{a2} implies
\begin{align*}
	\norm{\bar{f}}_{L^{\infty}(B_{1})}
	\leqslant \frac{L R^{2+i(\Phi)}}{\nu_{0} K^{1+i(\Phi)}}\norm{f}_{L^{\infty}(B_{1})}
	\leqslant  \frac{L}{\nu_0}\norm{f}_{L^{\infty}(B_{1})}.
\end{align*}
By recalling $i(\Phi)> -1$ and setting 
\begin{align*}
	K:= 2\left(1+\norm{u}_{L^{\infty}(B_{1})} + \left[\frac{L}{\nu_0}\norm{f}_{L^{\infty}(B_{1})} \right]^{\frac{1}{1+i(\Phi)}}\right)
\end{align*}
and 
\begin{align*}
	R:= \varepsilon_{0}^{\frac{1}{2+i(\Phi)}},
\end{align*}
we see that $\bar{u}$ solves the equation \eqref{sr:2} in the same class as \eqref{xi-eq} under the small regime in \eqref{small}.

\subsection{Basic regularity results}
\label{sec2-3}
In this subsection, we state some basic regularity results for \eqref{xi-eq}. The first key tool to be employed later is the classical Ishii-Lions lemma, see \cite{CIL1}.

\begin{lem}[Ishii-Lions Lemma]  
	\label{lem_IL}
	Let $u$ be a viscosity solution of \eqref{xi-eq} with $\osc\limits_{B_{1}}u \leqslant 1$ and $\norm{f}_{L^{\infty}(B_{1})}\leqslant \varepsilon_{0}\ll 1$ under the assumptions \ref{a1}-\ref{a3}, where $\xi\in \R^{n}$ is any vector. Suppose that $\mathcal{B}\subset B_{1}$ is an open subset and $\psi \in C^{2}(\mathcal{B}\times\mathcal{B})$. Define a map $v : \mathcal{B}\times \mathcal{B}\rightarrow \R$ as
	\[
		v(x,y):= u(x)-u(y).
	\] 
	Suppose further $(\bar{x},\bar{y})\in \mathcal{B}\times \mathcal{B}$ is a local maximum point of $v-\psi$ in $\mathcal{B}\times\mathcal{B}$. Then, for each $\delta>0$, there exist matrices $X_{\delta},Y_{\delta}\in \mathcal{S}(n)$ such that 
	\begin{align*}
		G_{\xi}(\bar{x},D_{x}\psi(\bar{x},\bar{y}),X_{\delta}) \leqslant 0 \leqslant 
		G_{\xi}(\bar{y},-D_{y}\psi(\bar{x},\bar{y}),Y_{\delta})
	\end{align*}
	and
	\begin{align*}
		-\left( \frac{1}{\delta} + \norm{A} \right)I \leqslant
		\begin{pmatrix}
			X_{\delta} & 0 \\
			0 & -Y_{\delta}
		\end{pmatrix}
		\leqslant
		A + \delta A^{2}
	\end{align*}
	with $A:= D^{2}\psi(\bar{x},\bar{y})$. 
\end{lem}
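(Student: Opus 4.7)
The plan is to deduce the statement essentially as a direct application of the classical Crandall-Ishii-Lions \emph{Theorem of Sums} (see \cite[Theorem 3.2]{CIL1}) applied to the product function
\[
    w(x,y) := u(x) - u(y) - \psi(x,y),
\]
which is upper semicontinuous on $\mathcal{B}\times\mathcal{B}$ and, by hypothesis, attains a local maximum at $(\bar{x},\bar{y})$. The Theorem of Sums produces, for every $\delta>0$, symmetric matrices $X_\delta,Y_\delta\in\mathcal{S}(n)$ such that $(D_x\psi(\bar{x},\bar{y}),X_\delta)$ belongs to the limiting superjet of $u$ at $\bar{x}$, the pair $(-D_y\psi(\bar{x},\bar{y}),Y_\delta)$ belongs to the limiting subjet of $u$ at $\bar{y}$, and the matrix sandwich bound of the lemma with $A=D^2\psi(\bar{x},\bar{y})$ is satisfied. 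The upper bound $A+\delta A^2$ comes from the usual quadratic approximation of $\psi$ near $(\bar{x},\bar{y})$, and the lower bound $-(1/\delta+\|A\|)I$ is the standard lower control uniform in $\delta$. Thus the entire matrix part of the conclusion is obtained directly from \cite{CIL1} without any input from the equation.

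The remaining task is to transfer the viscosity sub/supersolution property from the $C^2$ test-function definition given in the paper to evaluation on limiting jets. This is a routine density argument: given any $(p,M)$ in the limiting superjet of $u$ at a point $x_0$ with $p\neq 0$, by the very definition there exists a sequence $(x_k,p_k,M_k)\to(x_0,p,M)$ with $(p_k,M_k)$ a superjet of $u$ at $x_k$, $p_k\neq 0$ for large $k$, and $u(x_k)\to u(x_0)$. Each $(p_k,M_k)$ can be realized by a $C^2$ test function touching $u$ from above at $x_k$ with nonvanishing gradient there, so the subsolution inequality yields $G_\xi(x_k,p_k,M_k)\leq 0$; continuity of $\Phi$, $F$ and $f$ on $\{p\neq -\xi\}$ then passes to the limit and gives $G_\xi(x_0,p,M)\leq 0$. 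Applied at $x_0=\bar{x}$ this produces the left-hand inequality; the symmetric argument at $\bar{y}$ using the limiting subjet yields the right-hand one, completing the proof.

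The principal subtlety is the nonvanishing-gradient clause built into the paper's definition of viscosity solution: the density/approximation step above is immediate only when the gradient of the relevant limiting jet is nonzero, whereas in the singular range $-1<i(\Phi)<0$ the map $p\mapsto\Phi(x,|\xi+p|)$ is not continuous at $p=-\xi$. The lemma is therefore intended for use in situations where $\psi$ and $\xi$ are chosen so that $D_x\psi(\bar{x},\bar{y})$ and $-D_y\psi(\bar{x},\bar{y})$ stay bounded away from zero at the maximum point; this is precisely the role of the shift by $\xi$ in the formulation of \eqref{xi-eq} and of the penalization functions to be constructed in the subsequent sections. Under that standing arrangement, the passage through \cite[Theorem 3.2]{CIL1} sketched above is entirely routine and delivers both inequalities together with the stated matrix bound.
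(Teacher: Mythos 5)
The paper offers no proof of this lemma at all---it is quoted directly from \cite{CIL1}---and your derivation via the Theorem of Sums \cite[Theorem 3.2]{CIL1} applied to $u(x)-u(y)-\psi(x,y)$, followed by the standard approximation of limiting jets by genuine jets realized through $C^2$ test functions, is exactly the intended justification and is correct. You also rightly isolate the one genuine delicacy, namely that passing the viscosity inequalities to limiting jets requires $D_x\psi(\bar{x},\bar{y})$ and $-D_y\psi(\bar{x},\bar{y})$ to stay away from the set where the test-function definition is silent (and, in the singular range, away from $-\xi$), which is indeed arranged in the paper's only application of this lemma (Lemma~\ref{lem_dHC}, where these gradients are shown to be bounded below by a positive quantity).
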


Another important result to be applied afterwards is the results of \cite{IS2} in our settings.

\begin{thm}[Imbert-Silvestre]
	\label{thm_IS}
	Let $u\in C(B_{1})$ be a viscosity solution to \eqref{xi-eq} for some $\xi\in \R^{n}$. Suppose there exists $\gamma>0$ such that 
	\begin{enumerate}
		\item[1.] for all $(x,p)\in B_{1}\times \R^{n}$ with $|p|>\gamma$, it holds that
		\begin{align*}
			G_{\xi}(x,p,0) \leqslant c_{0}|p|
		\end{align*}	
		for some constant $c_{0}>0$ and 	
		\item[2.] for any fixed $(x,p)\in B_{1}\times \R^{n}$ with $|p|>\gamma$,  $G_{\xi}(x,p,M)$ is uniformly elliptic with respect to $M$.   
	\end{enumerate}
	Then $u\in C_{\loc}^{0,\alpha}(B_{1})$ for some $\alpha\in (0,1)$. In particular, the following estimate
	\begin{align*}
		\norm{u}_{C^{0,\alpha}(B_{1/2})} \leqslant c\norm{u}_{L^{\infty}(B_{1})}
	\end{align*}
	holds true for some constant $c>0$. The constants $\alpha\in (0,1)$ and $c>0$ depending on $n$, the ellipticity constants and the parameter $\gamma>0$.
\end{thm}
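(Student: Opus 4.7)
The plan is to adapt the standard Ishii--Lions doubling-of-variables technique to the degenerate/singular operator $G_\xi$. Fix $x_0\in B_{1/2}$, and for parameters $L, M_1>0$ and $\alpha\in(0,1)$ to be chosen, consider
\begin{align*}
\Theta := \sup_{x,y\in \overline{B_{3/4}}}\Bigl\{ u(x)-u(y) - L\omega(|x-y|) - M_1\bigl(|x-x_0|^{2}+|y-x_0|^{2}\bigr)\Bigr\},
\end{align*}
where $\omega$ is a strictly concave modulus with $\omega(r)\sim r^{\alpha}$ near $0$, e.g.\ $\omega(r)=r^{\alpha}-\eta r^{2\alpha}$ localized to small $r$, so that $\omega''(r)\leq -c\,r^{\alpha-2}$. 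It suffices to exhibit $L$ depending only on $n,\lambda,\Lambda,\gamma,c_{0}$ and $\norm{u}_{L^\infty(B_1)}$ such that $\Theta\leq 0$; letting $x_0$ vary over $B_{1/2}$ then produces the claimed $C^{0,\alpha}$ estimate at the fixed base point.

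I would argue by contradiction: if $\Theta>0$, the supremum is attained at some interior pair $(\bar x,\bar y)$ with $\bar x\neq \bar y$ (choosing $M_1$ of order $\norm{u}_{L^\infty(B_1)}$ forces the maximizer away from $\partial B_{3/4}$). Apply Lemma \ref{lem_IL} with $\psi(x,y)=L\omega(|x-y|)+M_1(|x-x_0|^2+|y-x_0|^2)$ to obtain, for each $\delta>0$, matrices $X_\delta,Y_\delta\in\mathcal{S}(n)$ together with gradients $p_x = L\omega'(|\bar x-\bar y|)\,e+2M_1(\bar x-x_0)$ and $-p_y = L\omega'(|\bar x-\bar y|)\,e-2M_1(\bar y-x_0)$, where $e=(\bar x-\bar y)/|\bar x-\bar y|$, satisfying
\begin{align*}
G_\xi(\bar x,p_x,X_\delta)\leq 0 \leq G_\xi(\bar y,-p_y,Y_\delta)
\end{align*}
and the usual matrix upper bound on $\mathrm{diag}(X_\delta,-Y_\delta)$. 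Since $\omega'(r)\to\infty$ as $r\to 0$, taking $L$ sufficiently large ensures $|p_x|,|p_y|>\gamma$, which activates hypotheses (1) and (2) at both points.

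The crux is spectral: in $2n\times 2n$ block form, $D^2(L\omega(|x-y|))$ carries a single very negative eigenvalue of size $L\omega''(|\bar x-\bar y|)$ in the direction $(e,-e)/\sqrt2$, while the quadratic penalty $D^2(M_1|\cdot-x_0|^2)$ contributes only $O(M_1)$. Choosing $\delta$ small then forces $X_\delta-Y_\delta$ to have at least one eigenvalue of order $L\omega''(|\bar x-\bar y|)<0$, with all eigenvalues bounded above by $O(M_1)$. Subtracting the two viscosity inequalities, using the uniform ellipticity of $G_\xi$ in the matrix slot from hypothesis (2) to bound $G_\xi(\bar y,-p_y,X_\delta)-G_\xi(\bar y,-p_y,Y_\delta)$ from below by a Pucci operator in $X_\delta-Y_\delta$, and absorbing the gradient mismatch via $|G_\xi(\bar y,-p_y,0)-G_\xi(\bar x,p_x,0)|\leq c_0(|p_x|+|p_y|)$ from hypothesis (1), one reaches an inequality of the schematic form
\begin{align*}
0\leq -c_1 L|\bar x-\bar y|^{\alpha-2} + c_2 L|\bar x-\bar y|^{\alpha-1} + c_3(M_1+1).
\end{align*}
Since $\Theta>0$ together with $\Theta\leq 2\norm{u}_{L^\infty(B_1)}$ forces $|\bar x-\bar y|\leq (2\norm{u}_{L^\infty(B_1)}/L)^{1/\alpha}$ to be arbitrarily small as $L$ grows, the leading $-c_1 L|\bar x-\bar y|^{\alpha-2}$ term dominates and produces the contradiction. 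The principal obstacle is precisely this matrix computation: verifying that the dominant negative eigenvalue of $D^2\psi$ survives the perturbation $\delta A^2$ and the $M_1 I$ term, and choosing $\alpha$ small enough so that the effective ellipticity constants of $G_\xi$ (which depend implicitly on $\gamma$ and on the size of $\Phi(\bar x,|\xi+p_x|)$) beat the linear-in-$|p|$ error from hypothesis (1). This is the technical heart of the Imbert--Silvestre argument in \cite{IS2}.
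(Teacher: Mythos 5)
First, a point of comparison: the paper does not prove Theorem \ref{thm_IS} at all --- it is quoted verbatim from \cite{IS2}, where the proof is measure-theoretic (an ABP-type estimate and a growth/$L^{\varepsilon}$ lemma valid only at points where the gradient is large, combined with barrier constructions and a covering argument). Your Ishii--Lions doubling-of-variables strategy is therefore a genuinely different route from the source, and the question is whether it can be closed under the stated hypotheses. Your spectral analysis is fine: because $D^{2}\psi$ has the block form $\begin{pmatrix} Z & -Z\\ -Z & Z\end{pmatrix}+O(M_{1})I$, testing with $(z,z)$ does show that \emph{all} eigenvalues of $X_{\delta}-Y_{\delta}$ are $O(M_{1}+\delta)$ while one is $\leqslant 4L\omega''(|\bar x-\bar y|)+O(M_{1}+\delta)$; this is exactly the computation the authors carry out in Lemma \ref{lem_dHC}.

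The gap is the step where you combine the two viscosity inequalities. You write that the gradient mismatch is absorbed via $|G_{\xi}(\bar y,-p_{y},0)-G_{\xi}(\bar x,p_{x},0)|\leqslant c_{0}(|p_{x}|+|p_{y}|)$, but hypothesis (1) is a \emph{one-sided} bound on each value $G_{\xi}(x,p,0)$ separately, not a bound on differences; and, more seriously, hypothesis (2) asserts uniform ellipticity of $M\mapsto G_{\xi}(x,p,M)$ only at each \emph{fixed} $(x,p)$. Nothing in (1)--(2) compares the operator $G_{\xi}(\bar x,p_{x},\cdot)$ with $G_{\xi}(\bar y,-p_{y},\cdot)$, which is precisely what the subtraction leading to a bound on $P^{-}_{\lambda,\Lambda}(X_{\delta}-Y_{\delta})$ requires. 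To close this step you must fall back on the explicit product structure $G_{\xi}(x,p,M)=\Phi(x,|\xi+p|)F(M)-f(x)$ with a single $(\lambda,\Lambda)$-elliptic $F$, divide each viscosity inequality by $\Phi$, and then supply a quantitative positive lower bound on $\Phi(x,|\xi+p|)$ uniformly over $|p|>\gamma$ (and uniformly in $\xi$) so that the terms $f/\Phi$ stay bounded --- none of which is stated or verified in your sketch. Once all of that is assumed, your argument is no longer a proof of the Imbert--Silvestre theorem but a re-run of the Lipschitz estimates \ref{R1}--\ref{R2} of Lemma \ref{lem_dHC} (indeed it would yield a modulus better than $C^{0,\alpha}$, a sign that you are using strictly more structure than the theorem provides). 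The content of \cite{IS2}, and the reason it is invoked here rather than continuing with Ishii--Lions, is exactly to produce H\"older continuity from large-gradient-only, one-sided information without any such compatibility between base points; reproducing that requires the measure-theoretic machinery, not the doubling argument.
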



\section{H\"older continuity}
\label{sec3}

In this section we provide H\"older regularity for solutions of \eqref{xi-eq}, where $\xi$ is any vector, under the small regime.

\begin{lem}[H\"older continuity]
	\label{lem_dHC}
	Let $u$ be a viscosity solution of \eqref{xi-eq} under the assumptions \ref{a1}-\ref{a3} with $\osc\limits_{B_{1}}u \leqslant 1$, $\norm{f}_{L^{\infty}(B_{1})}\leqslant \varepsilon_{0}<1$ and $\nu_0 = \nu_1 =1$. Let $B_{R}\equiv B_{R}(x_0)\subset B_{1}$ be any ball. Then, we have the following regularity results:
	\begin{description}
		\item[(R1)\label{R1}] If $-1<i(\Phi)<0$ and $|\xi|=0$, then $u$ is Lipschitz continuous in $B_{R/2}$ with the estimate
	\begin{align}
		\label{lem_HC:1}
		[u]_{C^{0,1}(B_{R/2})} \leqslant C_{sl}
	\end{align}
for some constant $C_{sl}\equiv C_{sl}(n,\lambda,\Lambda,i(\Phi),L,R)$. 
		\item[(R2)\label{R2}] If $i(\Phi)\geqslant 0$ and $|\xi|> A_{0}$ with $A_{0}\equiv A_{0}(n,\lambda,\Lambda,i(\Phi),L,R)$, then $u$ is Lipschitz continuous in $B_{R/2}$ with the estimate 
	\begin{align}
		\label{lem_HC:2}
		[u]_{C^{0,1}(B_{R/2})} \leqslant C_{dl}
	\end{align}
for some constant $C_{dl}\equiv C_{dl}(n,\lambda,\Lambda,i(\Phi),L,R)$. 
		\item[(R3)\label{R3}] If $i(\Phi)\geqslant 0$ and $|\xi|\leqslant A_0$, then $u\in C^{0,\beta}(B_{R/2})$ with the estimate 
		\begin{align}
			\label{lem_HC:3}
			[u]_{C^{0,\beta}(B_{R/2})} \leqslant C_{ds},
		\end{align}
where $\beta\equiv \beta(n,\lambda,\Lambda, R, A_0)\in (0,1)$ and $C_{ds}\equiv C_{ds}(n,\lambda,\Lambda, R, A_0)$.
	\end{description}
\end{lem}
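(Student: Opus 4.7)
My plan is to address the three regularity regimes separately: Case (R3) via a direct application of the Imbert--Silvestre Theorem \ref{thm_IS}, and Cases (R1) and (R2) via the Ishii--Lions doubling-variables method based on Lemma \ref{lem_IL}. For Case (R3) I would take $\gamma := 2A_0$ in Theorem \ref{thm_IS}. When $|p| > \gamma$ and $|\xi| \leq A_0$ one has $|\xi + p| \geq A_0$, and the almost non-decreasing property of $t \mapsto \Phi(x,t)/t^{i(\Phi)}$ together with $i(\Phi) \geq 0$ provides the uniform lower bound $\Phi(x, |\xi+p|) \geq L^{-1} A_0^{i(\Phi)}$, which renders $M \mapsto G_\xi(x, p, M)$ uniformly elliptic with constants independent of $p$. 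The linear growth condition on $G_\xi(x, p, 0)$ is verified from the boundedness of $f$ and the structure of $F$ after a standard normalization of $F(0)$, and Theorem \ref{thm_IS} then yields \eqref{lem_HC:3}.

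For Cases (R1) and (R2), I fix $x_1 \in B_{R/2}$ and work with the auxiliary function
\[
    \Psi(x,y) := u(x) - u(y) - L_1 \omega(|x - y|) - L_2 \bigl( |x - x_1|^2 + |y - x_1|^2 \bigr)
\]
on $\overline{B_{R/2}(x_1)} \times \overline{B_{R/2}(x_1)}$, where $\omega(t) := t - \tau t^2$ is a strictly concave modulus and $L_1, L_2, \tau > 0$ are constants to be fixed. Showing $\Psi \leq 0$ is equivalent to the Lipschitz estimate with constant of order $L_1$. Choosing $L_2$ large enough compared to $\osc u$ forces any positive supremum of $\Psi$ to be attained at an interior pair $(\bar x, \bar y)$, necessarily with $\bar x \neq \bar y$. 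Lemma \ref{lem_IL} then produces matrices $X_\delta, Y_\delta$ and gradient vectors $p, q$ of magnitude of order $L_1$. The strict concavity $\omega''(t) = -2\tau < 0$ feeds a very negative eigenvalue of order $-L_1 \tau$ into $X_\delta - Y_\delta$ along the direction $(\bar x - \bar y)/|\bar x - \bar y|$; via the Pucci-$(\lambda,\Lambda)$-ellipticity this yields a lower bound
\[
    F(X_\delta) - F(Y_\delta) \;\geq\; c \lambda L_1 \tau - C \bigl( L_1 \omega'(|\bar x - \bar y|)/|\bar x - \bar y| + L_2 \bigr) + o_\delta(1).
\]
The two equation inequalities supply the competing upper bound
\[
    F(X_\delta) - F(Y_\delta) \;\leq\; \|f\|_{L^\infty(B_1)} \bigl( \Phi(\bar x, |\xi + p|)^{-1} + \Phi(\bar y, |\xi + q|)^{-1} \bigr),
\]
and the desired contradiction follows once $\Phi$ is uniformly bounded below at the relevant points: in Case (R1), $|\xi + p| = |p| \gtrsim L_1$ and \ref{a2} gives $\Phi \gtrsim L_1^{i(\Phi)}$, while in Case (R2), choosing $A_0$ sufficiently large forces $|\xi + p| \gtrsim A_0$ and hence $\Phi \gtrsim A_0^{i(\Phi)}$.

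I expect the principal obstacle to be the singular Case (R1): because $i(\Phi) < 0$, the lower bound on $\Phi$ \emph{deteriorates} as $L_1$ grows, so one cannot freely inflate $L_1$ to absorb the right-hand side. The argument survives precisely because the Pucci lower bound of order $L_1 \tau$ must only exceed an upper bound of order $\|f\|_{L^\infty(B_1)} L_1^{-i(\Phi)}$, and since $1 + i(\Phi) > 0$ while $\|f\|_{L^\infty(B_1)} \leq \varepsilon_0$ is small, the required inequality is achieved for a universal choice $L_1 = C_{sl}(n, \lambda, \Lambda, i(\Phi), L, R)$. In Case (R2) a delicate calibration between $A_0$ and the candidate Lipschitz constant $C_{dl}$ is needed to guarantee $|\xi + p| \geq |\xi| - |p| \geq A_0 - L_1 > 0$ even when $p$ is anti-aligned with $\xi$, so $A_0$ must be taken large relative to $L_1$ while $L_1$ in turn is fixed by the other universal constants.
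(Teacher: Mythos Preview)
Your overall strategy matches the paper's: Ishii--Lions doubling for (R1) and (R2), Imbert--Silvestre for (R3). The handling of (R3) and the balancing of $A_0$ against $L_1$ in (R2) are both fine. The difficulty is in the displayed lower bound you wrote for $F(X_\delta)-F(Y_\delta)$:
\[
F(X_\delta)-F(Y_\delta)\;\geq\;c\lambda L_1\tau \;-\; C\Bigl(L_1\,\frac{\omega'(|\bar x-\bar y|)}{|\bar x-\bar y|}+L_2\Bigr)+o_\delta(1).
\]
As written this does \emph{not} close. With $\omega(t)=t-\tau t^2$ you have $\omega'(t)/t=1/t-2\tau$, so the term $-CL_1/|\bar x-\bar y|$ swamps the gain $c\lambda L_1\tau$ whenever $|\bar x-\bar y|$ is small, and you get no contradiction. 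The remedy is the standard cancellation trick, which the paper uses: test the block matrix inequality from Lemma~\ref{lem_IL} against vectors of the form $(z,z)\in\R^{2n}$. The $Z$-blocks cancel and one obtains
\[
\langle(X_\delta-Y_\delta)z,z\rangle\;\leq\;(4L_2+2\delta)|z|^2\qquad\text{for all }z\in\R^n,
\]
so every eigenvalue of $X_\delta-Y_\delta$ is at most $4L_2+2\delta$, \emph{independent of $L_1$ and of $|\bar x-\bar y|$}. Testing against $(\hat e,-\hat e)$ with $\hat e=(\bar x-\bar y)/|\bar x-\bar y|$ then shows one eigenvalue is at most $4L_2+2\delta+4L_1\omega''(|\bar x-\bar y|)$. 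With your modulus this is $4L_2+2\delta-8\tau L_1$, and the Pucci lower bound becomes
\[
P^-_{\lambda,\Lambda}(X_\delta-Y_\delta)\;\geq\;8\lambda\tau L_1-(\lambda+(n-1)\Lambda)(4L_2+2\delta),
\]
with no $1/|\bar x-\bar y|$ pollution. From here your argument for both (R1) and (R2) goes through exactly as you outlined: the right-hand side is at most $cL_1^{-i(\Phi)}$ in (R1) and $cA_0^{-i(\Phi)}\leq c$ in (R2), and choosing $L_1$ large (using $1+i(\Phi)>0$ in (R1)) gives the contradiction.

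Once this is fixed, your quadratic modulus $\omega(t)=t-\tau t^2$ is a perfectly valid alternative to the paper's $\omega(t)=t-\omega_0 t^{3/2}$; the paper's choice makes $\omega''$ blow up at $0$, but that extra strength is not needed here since the $(z,z)$ cancellation already controls the transverse eigenvalues.
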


\begin{proof}
	For the proof of \ref{R1} and \ref{R2}, it suffices to show that there exist positive constants $L_1$ and $L_2$ such that 
	\begin{align}
		\label{HC:1}
		\mathcal{L}:= \sup\limits_{x,y\in B_{R}} \left( u(x)-u(y)-L_1\omega(|x-y|) - L_{2}\left( |x-z_0|^2 + |y-z_0|^2 \right) \right)\leqslant 0
	\end{align}
	for every $z_0\in B_{R/2}$, where 
	\begin{align}
		\label{HC:omega}
		\omega(t) = 
		\begin{cases} t-\omega_{0}t^{\frac{3}{2}} & \mbox{if } t \leqslant t_0:= \left(\frac{2}{3\omega_0}\right)^{2}, \\ \omega(t_0) & \mbox{if } t\geqslant t_0. \end{cases}
	\end{align}
We choose $\omega_0\in (0,2/3)$ in such a way that $t_0\geqslant 1$. For instance, we take any constant $\omega_0 \leqslant 1/3$. By the contradiction, suppose that there are no such positive constants $L_1$ and $L_{2}$ satisfying \eqref{HC:1} for every $z_0\in B_{R/2}$. Then there exists a point $z_0\in B_{R/2}$ so that $\mathcal{L}>0$ for all numbers $L_1>0$ and $L_2>0$. Now we define two auxiliary functions $\phi, \psi : \overline{B_{R}}\times \overline{B_{R}}\rightarrow \R$ given by 
	\begin{align}
		\label{HC:2}
		\psi(x,y):= L_1\omega(|x-y|) + L_{2}\left( |x-z_0|^2 + |y-z_0|^2 \right)
	\end{align}
	and 
	\begin{align}
		\label{HC:3}
		\phi(x,y):= u(x)-u(y)-\psi(x,y).
	\end{align}
Let $(\bar{x},\bar{y})\in \overline{B_{R}}\times \overline{B_{R}}$ be a maximum point for $\phi$. Then we have 
	\begin{align*}
	\phi(\bar{x},\bar{y}) = \mathcal{L}>0
	\end{align*}
and 
	\begin{align*}
	L_1\omega(|\bar{x}-\bar{y}|) + L_{2}\left( |\bar{x}-z_0|^2 + |\bar{y}-z_0|^2 \right)
	\leqslant 
	\osc\limits_{B_{1}}u \leqslant 1.
	\end{align*}
Now we select
	\begin{align*}
		L_2:= \frac{64}{R^2}.
	\end{align*}
	This choice of $L_2$ ensures 
	\begin{align}
		\label{HC:7}
		|\bar{x}-z_0| + |\bar{y}-z_0| \leqslant \frac{R}{4} 
		\quad\text{and}\quad
		|\bar{x}-\bar{y}|\leqslant \frac{R}{4}.
	\end{align}
	This means that the points $\bar{x}$ and $\bar{y}$ belong to the open ball $B_{R}$ and also we are able to assume that $\bar{x}\neq \bar{y}$; otherwise $\mathcal{L} \leqslant 0$ clearly. The rest of the proof is divided into several steps.
	
\textbf{Step 1.} We are in a position to apply Lemma \ref{lem_IL} in order to ensure the existence of a limiting subjet $(\xi_{\bar{x}},X_{\delta})$ of $u$ at $\bar{x}$ and a limiting superjet $(\xi_{\bar{y}},Y_{\delta})$ of $u$ at $\bar{y}$, where 
\begin{align*}
	\xi_{\bar{x}}:= D_{x}\psi(\bar{x},\bar{y})= L_{1}\omega'(|\bar{x}-\bar{y}|)\frac{\bar{x}-\bar{y}}{|\bar{x}-\bar{y}|} + 2L_{2}(\bar{x}-z_0)
\end{align*}
and 
\begin{align*}
	\xi_{\bar{y}}:= -D_{y}\psi(\bar{x},\bar{y}) = L_{1}\omega'(|\bar{x}-\bar{y}|)\frac{\bar{x}-\bar{y}}{|\bar{x}-\bar{y}|} - 2L_{2}(\bar{y}-z_0),
\end{align*}
such that matrices $X_{\delta}$ and $Y_{\delta}$ satisfy the matrix inequality
\begin{align}
	\label{HC:10}
		\begin{pmatrix}
			X_{\delta} & 0 \\
			0 & -Y_{\delta}
		\end{pmatrix}
		\leqslant
		\begin{pmatrix}
			Z & -Z \\
			-Z & Z
		\end{pmatrix}
		+ (2L_2 + \delta)I,
	\end{align}
where 
\begin{align*}
	\begin{split}
	Z&:= L_1D^2(\omega(|\cdot|))(\bar{x}-\bar{y})
	\\&
	= L_1\left[ \frac{\omega'(|\bar{x}-\bar{y}|)}{|\bar{x}-\bar{y}|}I + \left( \omega''(|\bar{x}-\bar{y}|)- \frac{\omega'(|\bar{x}-\bar{y}|)}{|\bar{x}-\bar{y}|} \right)\frac{(\bar{x}-\bar{y})\otimes (\bar{x}-\bar{y})}{|\bar{x}-\bar{y}|^{2}} \right]
	\end{split}
\end{align*}
and the constant $\delta>0$ only depends on the norm of $Z$, which can be selected sufficiently small. Applying the inequality \eqref{HC:10} for vectors of the form $(z,z)\in \R^{2n}$, we find 
\begin{align*}
	\inner{(X_{\delta}-Y_{\delta})z}{z} \leqslant (4L_2 + 2\delta)|z|^{2}.
\end{align*}
The last inequality yields that all the eigenvalues of the matrix $(X_{\delta}-Y_{\delta})$ are not larger than $4L_2 + 2\delta$. On the other hand, applying again \eqref{HC:10} for the vector $\bar{z}:= \left( \frac{\bar{x}-\bar{y}}{|\bar{x}-\bar{y}|}, \frac{\bar{y}-\bar{x}}{|\bar{x}-\bar{y}|} \right)$, we have 
\begin{align*}
	\begin{split}
	\inner{(X_{\delta}-Y_{\delta}) \frac{\bar{x}-\bar{y}}{|\bar{x}-\bar{y}|}}{ \frac{\bar{x}-\bar{y}}{|\bar{x}-\bar{y}|}}
	&\leqslant
	\left( 4L_2 + 2\delta + 4L_1\omega''(|\bar{x}-\bar{y}|)\right)\left|\frac{\bar{x}-\bar{y}}{|\bar{x}-\bar{y}|} \right|^2
	\\&
	=
	\left( 4L_2 + 2\delta - \frac{6\omega_0 L_1}{|\bar{x}-\bar{y}|^{1/2}}\right)\left|\frac{\bar{x}-\bar{y}}{|\bar{x}-\bar{y}|} \right|^2
	\\&
	\leqslant
	\left( 4L_2 + 2\delta - 6\omega_0 L_1\right)\left|\frac{\bar{x}-\bar{y}}{|\bar{x}-\bar{y}|} \right|^2,
	\end{split}
\end{align*}
where we have used the definition of $\omega$ in \eqref{HC:omega} together with  $|\bar{x}-\bar{y}|\leqslant 1/4$ in \eqref{HC:7}. So at least one eigenvalue of $(X_{\delta}-Y_{\delta})$ is not larger than $4L_2 + 2\delta -6\omega_0 L_1$, where this quantity can be negative for large values of $L_{1}$. By the definition of the extremal Pucci operator, we see 
\begin{align*}
	\begin{split}
	P_{\lambda,\Lambda}^{-}(X_{\delta}-Y_{\delta})
	&\geqslant
	-\lambda(4L_2 + 2\delta-6\omega_0 L_1)- \Lambda (n-1)(4L_2 + 2\delta)
	\\&
	\geqslant
	-(\lambda + (n-1)\Lambda)(4L_2 + 2\delta) + 6\omega_0\lambda L_1.
	\end{split}
\end{align*}
From two viscosity inequalities and the uniform ellipticity, we have 
\begin{align*}
	\Phi(\bar{x},|\xi + \xi_{\bar{x}}|)F(X_{\delta}) \leqslant f(\bar{x}),\quad 
	\Phi(\bar{y},|\xi + \xi_{\bar{y}}|)F(Y_{\delta})\geqslant f(\bar{y})
\end{align*}
 and 
\begin{align*}
	F(X_{\delta})\geqslant F(Y_{\delta}) + P_{\lambda,\Lambda}^{-}(X_{\delta}-Y_{\delta}).
\end{align*}

Combining last three displays, we have 
\begin{align}
	\label{HC:18}
	\begin{split}
		6\omega_0\lambda L_1
		&\leqslant
		(\lambda + (n-1)\Lambda)(4L_2 + 2\delta)
		\\&
		\quad +
		\frac{f(\bar{x})}{\Phi(\bar{x},|\xi + \xi_{\bar{x}}|)}
		- 
		\frac{f(\bar{y})}{\Phi(\bar{y},|\xi + \xi_{\bar{y}}|)}.
	\end{split}
\end{align}

At this stage, we shall separate it into several cases depending on the quantity of $|\xi|$ and the positiveness of $i(\Phi)$. 

\textbf{Step 2: Proof of \ref{R1}.} Suppose $-1<i(\Phi)<0$ and $\xi = 0$. By triangle inequality $\eqref{HC:7}_{2}$, we observe that 
\begin{align}
	\label{HC:19}
	|\xi_{\bar{x}}|\leqslant L_1(1+\frac{3}{2}\omega_0) + 2L_2
	\leqslant
	\frac{7}{4} L_1
\end{align}
and
\begin{align}
	\label{HC:19_1}
	|\xi_{\bar{x}}| \geqslant L_{1}\left(1-\frac{3\omega_0}{2}|\bar{x}-\bar{y}|^{\frac{1}{2}}\right)-3L_2
	\geqslant 
	\frac{3L_1}{4}-3L_2 \geqslant 3L_2
\end{align}
for all $L_1\geqslant 8L_2$. In the exactly same way, we see 
\begin{align}
	\label{HC:20}
	|\xi_{\bar{y}}|\leqslant \frac{7}{4}L_1 
	\quad\text{and}\quad
	|\xi_{\bar{y}}|\geqslant 2L_2
\end{align}
for all $L_1\geqslant 8L_2$. Then we have 
\begin{align}
	\label{HC:20_1}
	\frac{f(\bar{x})}{\Phi(\bar{x},|\xi_{\bar{x}}|)} \leqslant
	c \frac{\norm{f}_{L^{\infty}(B_{1})}}{|\xi_{\bar{x}}|^{i(\Phi)}}
	\leqslant
	\frac{c}{L_1^{i(\Phi)}}
\end{align}
and 
\begin{align}
	\label{HC:20_2}
	\frac{-f(\bar{y})}{\Phi(\bar{y},|\xi_{\bar{y}}|)} \leqslant
	c \frac{\norm{f}_{L^{\infty}(B_{1})}}{|\xi_{\bar{y}}|^{i(\Phi)}}
	\leqslant 
	 \frac{c}{L_1^{i(\Phi)}}
\end{align}
for a constant $c\equiv c(i(\Phi),L)$. Using the last two displays in \eqref{HC:18}, we obtain 
\begin{align*}
	6\omega_{0}\lambda L_1 \leqslant 
	(\lambda + (n-1)\Lambda)(4L_2 + 2\delta) + \frac{c}{L_{1}^{i(\Phi)}}
\end{align*} 
for a constant $c\equiv c(n,\lambda,\Lambda,i(\Phi),L,R)$. Recalling $-1<i(\Phi)<0$ and taking $L_1$ large enough, depending only on $n,\lambda,\Lambda,i(\Phi),L$ and $R$, we get a contradiction. Then the first part of the lemma is proved. 

\textbf{Step 3: Proof of \ref{R2}.} We suppose that $i(\Phi)\geqslant 0$ and $|\xi|>A_{0}$ for a constant $A_{0}$ to be determined in a moment. We set 
\begin{align}
	\label{HC:A0}
	A_0 := \frac{35 L_1}{2}
\end{align}
for $L_1>1$ to be selected soon. This choice of $A_0$ together with \eqref{HC:19} and \eqref{HC:20_1} leads to 
\begin{align*}
	|\xi + \xi_{\bar{x}}|\geqslant A_0 - \frac{A_0}{10} = \frac{9A_0}{10}
	\quad\text{and}\quad
	|\xi + \xi_{\bar{y}}|\geqslant  \frac{9A_0}{10}.
\end{align*} 
Therefore, we have 
\begin{align*}
	\frac{f(\bar{x})}{\Phi(\bar{x},|\xi + \xi_{\bar{x}}|)} \leqslant
	c \frac{\norm{f}_{L^{\infty}(B_{1})}}{|\xi + \xi_{\bar{x}}|^{i(\Phi)}}
	\leqslant
	\frac{c}{A_0^{i(\Phi)}}
\end{align*}
and 
\begin{align*}
	\frac{-f(\bar{y})}{\Phi(\bar{y},|\xi + \xi_{\bar{y}}|)} \leqslant
	c \frac{\norm{f}_{L^{\infty}(B_{1})}}{|\xi + \xi_{\bar{y}}|^{i(\Phi)}}
	\leqslant 
	 \frac{c}{A_{0}^{i(\Phi)}}
\end{align*}
for a constant $c\equiv c(i(\Phi),L)$. Again using the last two displays in \eqref{HC:18}, we obtain 
\begin{align*}
	6\omega_{0}\lambda L_1 \leqslant 
	(\lambda + (n-1)\Lambda)(4L_2 + 2\delta) + \frac{c}{L_{1}^{i(\Phi)}}
\end{align*} 
for a constant $c\equiv c(n,\lambda,\Lambda,i(\Phi),L,R)$. By choosing $L_1$ large enough, depending only on $n,\lambda,\Lambda,i(\Phi),L$ and $R$, we have again a contradiction. Indeed, we have proved the second part of the lemma.

\textbf{Step 4: Proof of \ref{R3}.} Finally, we shall focus on proving \ref{R3}. Suppose now $|\xi| \leqslant A_0$, where $A_0$ has been determined in \eqref{HC:A0}. We consider the operator 
\begin{align*}
	G_{\xi}(x,p,M):= \Phi(x,|\xi + p|)F(M)-f(x).
\end{align*}
In fact, $G_{\xi}(x,p,M)$ is uniformly elliptic, whenever $|p|>2A_0$. At this stage, we apply Theorem \ref{thm_IS} to conclude the last part of the Lemma. The proof is complete.

\end{proof}


\section{Approximation}
\label{sec4}
Now we prove a key approximation lemma, which plays a crucial role in later arguments.

\begin{lem}
	\label{lem:AL}
	Let $u\in C(B_{1})$ be a viscosity solution of \eqref{xi-eq} with $\osc\limits_{B_{1}} \leqslant 1$, where $\xi\in \R^{n}$ is arbitrarily given. Suppose \ref{a1}-\ref{a3} hold true for $i(\Phi)\geqslant 0$ and $\nu_0=\nu_1=1$. Then, for any $\mu>0$, there exists a constant $\delta\equiv \delta(n,\lambda,\Lambda,i(\Phi),L,\mu)$ such that if 
	\begin{align}
		\label{lem:AL:1}
		\norm{f}_{L^{\infty}(B_{1})} \leqslant \delta,
	\end{align}
	then one can find $h\in C^{1,\bar{\alpha}}(B_{3/4})$ with the estimate $\norm{h}_{C^{1,\bar{\alpha}}(B_{3/4})} \leqslant c\equiv c(n,\lambda,\Lambda)$, for some $0<\bar{\alpha}<1$, satisfying 
	\begin{align}
		\label{lem:AL:2}
		\norm{u-h}_{L^{\infty}(B_{1/2})} \leqslant \mu.
	\end{align}
\end{lem}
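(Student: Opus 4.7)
The plan is a standard compactness--contradiction argument combined with the stability of viscosity solutions under uniform convergence. Suppose the conclusion fails: then there exist $\mu_0 > 0$ and sequences $\{u_k\}$, $\{\xi_k\} \subset \R^n$, $\{f_k\}$ such that each $u_k$ is a viscosity solution of \eqref{xi-eq} with data $(\xi_k, f_k)$, $\osc_{B_1} u_k \leqslant 1$ and $\norm{f_k}_{L^\infty(B_1)} \to 0$, yet $\norm{u_k - h}_{L^\infty(B_{1/2})} > \mu_0$ for every admissible $h$. After normalizing by subtracting $u_k(0)$, which leaves the equation invariant, we may also suppose $\norm{u_k}_{L^\infty(B_1)} \leqslant 1$.

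Compactness follows from Lemma \ref{lem_dHC}. Since $i(\Phi) \geqslant 0$, case \ref{R2} (when $\lvert \xi_k\rvert > A_0$) and case \ref{R3} (when $\lvert \xi_k\rvert \leqslant A_0$) together produce uniform H\"older control on $B_{R/2}(x_0)$ for any $B_R(x_0) \subset B_1$; a finite covering then gives uniform $C^{0,\beta_0}$ bounds for $\{u_k\}$ on $\overline{B_{7/8}}$, with $\beta_0 \in (0,1)$ depending only on $n, \lambda, \Lambda, i(\Phi), L$. By Arzel\`a--Ascoli we may extract a subsequence so that $u_k \to u_\infty$ uniformly on $\overline{B_{7/8}}$; passing to a further subsequence we may also assume either $\lvert \xi_k\rvert \to \infty$ or $\xi_k \to \xi_\infty \in \R^n$.

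The key step is to identify the limit PDE: I claim $u_\infty$ is a viscosity solution of $F(D^2 u_\infty) = 0$ in $B_{7/8}$. Let $\varphi \in C^2$ be a test function with $u_\infty - \varphi$ having a strict local maximum at $x_0$ and $D\varphi(x_0) \neq 0$; the standard stability argument produces contact points $x_k \to x_0$ of $u_k - \varphi$ with $u_k(x_k) \to u_\infty(x_0)$ and $D\varphi(x_k) \neq 0$ for large $k$. If $\lvert \xi_k\rvert \to \infty$, then $\lvert \xi_k + D\varphi(x_k)\rvert \to \infty$, and \ref{a2} with $\nu_0 = 1$ yields $\Phi(x_k, \lvert \xi_k + D\varphi(x_k)\rvert) \geqslant L^{-1} \lvert \xi_k + D\varphi(x_k)\rvert^{i(\Phi)}$, which is bounded below since $i(\Phi) \geqslant 0$; dividing the viscosity inequality by this positive quantity and letting $k \to \infty$ gives $F(D^2 \varphi(x_0)) \leqslant 0$. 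If instead $\xi_k \to \xi_\infty$, introduce the translates $v_k(x) := u_k(x) + \xi_k \cdot x$, which satisfy $\Phi(x, \lvert Dv_k\rvert) F(D^2 v_k) = f_k$ and converge uniformly to $v_\infty := u_\infty + \xi_\infty \cdot x$; an identical division argument, valid once $D\varphi(x_0) \neq 0$ because $\Phi(x_k, \lvert D\varphi(x_k)\rvert)$ is then bounded below, shows $F(D^2 v_\infty) = 0$, which is equivalent to $F(D^2 u_\infty) = 0$. This produces a \emph{restricted} viscosity solution (tests with nonvanishing gradient at the contact point), and it extends to an unrestricted one via the standard perturbation $\psi_\eta := \psi + \eta \cdot x$ and the continuity of the uniformly elliptic $F$. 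The Krylov--Safonov estimate \eqref{alpha}, scaled appropriately from $B_{7/8}$ to $B_{3/4}$, then yields $\norm{u_\infty}_{C^{1,\bar{\alpha}}(B_{3/4})} \leqslant c(n,\lambda,\Lambda)$, and choosing $h := u_\infty$ with $k$ so large that $\norm{u_k - u_\infty}_{L^\infty(B_{1/2})} < \mu_0$ contradicts the failure hypothesis. The main technical obstacle is the identification of the limit PDE: the degeneracy of $\Phi$ forces a case split according to $\lvert \xi_k\rvert$, the translation trick is needed in the bounded regime, and one must justify the extension from restricted to unrestricted viscosity inequalities.
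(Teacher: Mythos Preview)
Your compactness--contradiction framework and the case split on the size of $|\xi_k|$ match the paper's approach, and the translation $v_k = u_k + \xi_k \cdot x$ in the bounded regime is a clean reduction. The genuine gap is in the final step: the linear perturbation $\psi_\eta = \psi + \eta \cdot x$ does \emph{not} upgrade a restricted viscosity inequality (tests with $D\varphi\neq 0$) to an unrestricted one. If the limit $v_\infty$ happens to be differentiable at the perturbed contact point $x_\eta$ with $Dv_\infty(x_\eta)=0$, then the first-order optimality condition at the maximum of $v_\infty-\psi_\eta$ forces $D\psi_\eta(x_\eta)=Dv_\infty(x_\eta)=0$ for \emph{every} choice of $\eta$, so you never land in the admissible class of tests. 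Taking $v_\infty\equiv 0$ (touched from above by $\psi(x)=\tfrac12|x|^2$) already exhibits this; and since the conclusion of the lemma will make $v_\infty\in C^{1,\bar\alpha}$, the obstruction is not hypothetical.

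The paper treats exactly this point with a different, non-smooth perturbation. Working on the supersolution side, one assumes by contradiction that $F_\infty(M)<0$ for a quadratic $p$ touching $u_\infty$ from below with vanishing gradient; uniform ellipticity and $F_\infty(0)=0$ then force $M$ to have a positive eigenvalue. Letting $E$ be the span of the eigenvectors of $M$ with positive eigenvalue and $P_E$ the orthogonal projection onto $E$, one replaces $p$ by $p_\gamma(x)=p(x)+\gamma|P_E(x)|$. This function is singular along $\{P_E(x)=0\}$, and a case analysis---contact point $x_k^\gamma$ on or off this set, combined with a further linear probe $\gamma\langle e,P_E(x)\rangle$ in the former case---manufactures tests whose gradient is bounded away from zero while the Hessian is at least $M$, yielding $F_k(M)\geqslant -o(1)$ and hence $F_\infty(M)\geqslant 0$, a contradiction. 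This device, standard in the degenerate/singular fully nonlinear literature, is precisely what your outline is missing; once it is inserted in place of the linear perturbation, the rest of your argument goes through.
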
	

\begin{proof}
	By contradiction, we suppose the conclusion of the lemma fails. Then there exist $\mu_0>0$ and sequences of $\{F_{k}\}_{k=1}^{\infty}$, $\{\Phi_{k}\}_{k=1}^{\infty}$, $\{f_{k}\}_{k=1}^{\infty}$,  and $\{u_{k}\}_{k=1}^{\infty}$ and a sequence of vectors $\{\xi_{k}\}_{k=1}^{\infty}$ such that 
	
\begin{description}	
	\item[(C1) \label{c1}] $F_{k} \in C(\mathcal{S}(n), \R)$ is uniformly $(\lambda,\Lambda)$-elliptic,
	\item[(C2) \label{c2}] $\Phi_{k} \in C( B_{1}\times [0,\infty), [0,\infty))$ such that the map $t\mapsto \frac{\Phi_{k}(x,t)}{t^{i(\Phi)}}$ is almost non-decreasing and the map $t\mapsto \frac{\Phi(x,t)}{t^{s(\Phi)}}$ is  almost non-increasing with constant $L\geqslant 1$, and $ \Phi_{k}(x,1) = 1$ for all $x\in B_{1}$,
	\item[(C3)\label{c3}] $f_{k}\in C(B_{1})$ with $\norm{f_{k}}_{L^{\infty}(B_{1})} \leqslant \frac{1}{k}$ and
	\item[(C4)\label{c4}] $u_{k}\in C(B_{1})$ with $ \osc_{B_{1}}u_k \leqslant 1$ solves the equation
	\begin{align}
		\label{AL:0}
		\Phi_{k}(x,|\xi_{k}+Du_{k}|)F_{k}(D^2u_k) = f_{k}(x),
	\end{align}
\end{description}
but 
\begin{align}
	\label{AL:1}
	\sup\limits_{x\in B_{1/2}}|u_{k}(x)-h(x)|> \mu_0
\end{align}
for all $h\in C^{1,\bar{\alpha}}(B_{3/4})$ and every $0<\bar{\alpha}<1$.

The condition \ref{c1} implies that $F_{k}$ converges to some uniformly $(\lambda,\Lambda)$-elliptic operator $F_{\infty}\in C(\mathcal{S}(n),\R)$. Applying Lemma \ref{lem_dHC}, $u_{k}\in C^{0,\beta}_{\loc}(B_{1})\cap C(B_{1})$ for some $\beta\in (0,1)$. Using \eqref{lem_HC:2}, \eqref{lem_HC:3} and Arzela-Ascoli theorem, we have that the sequence $\{u_{k}\}_{k=1}^{\infty}$ converges to a function $u_{\infty}$ locally uniformly in $B_{1}$. In particular, there holds that 
\begin{align}
	\label{AL:2}
	u_{\infty}\in C(B_{1})\quad\text{and}\quad
	\osc\limits_{B_{1}} u_{\infty} \leqslant 1.
\end{align} 

Now we prove that the limiting function $u_{\infty}$ is a viscosity solution of the homogeneous equation 
\begin{align}
	\label{AL:3}
	F_{\infty}(D^{2}u_{\infty}) = 0 \quad\text{in}\quad B_{3/4}.
\end{align}

For this, first we verify that $u_{\infty}$ is a viscosity supersolution. Let 
\begin{align*}
	p(x):= \frac{1}{2}\inner{M(x-y)}{x-y} + \inner{b}{x-y} + u_{\infty}(y)
\end{align*}
be a quadratic polynomial touching $u_{\infty}$ from below at a point $y\in B_{3/4}$. Without loss of generality, let us assume $|y|=u_{\infty}(y) = 0$. Then there exists a sequence $x_{k}\rightarrow 0$ as $k\rightarrow \infty$ such that $u_{k}-\varphi$ has a local minimum at $x_{k}$. Observe that $D\varphi(x_k)\rightarrow b$ and $D^{2}\varphi(x_k)\rightarrow M$. Since $u_{k}$ is a viscosity solution of \eqref{AL:0}, we have 
\begin{align}
	\label{AL:6}
	\Phi_{k}(x_k,|\xi_{k} + D\varphi(x_k)|)F_{k}(D^{2}\varphi(x_k)) \geqslant f_{k}(x_k).
\end{align}
For the ease of presentation, from now on we shall consider several cases depending on the boundedness of sequence $\{\xi_{k}\}_{k=1}^{\infty}$.

\textbf{Case 1: Sequence $\{\xi_{k}\}_{k=1}^{\infty}$ is unbounded.} In this case, we can assume $|\xi_{k}|\rightarrow \infty$ (up to a subsequence). As a consequence, we can show (up to a subsequence) that 
\begin{align}
	\label{AL:7}
	|\xi_{k} + D\varphi(x_k)|\leqslant |\xi_{k}|-|D\varphi(x_k)| \geqslant |\xi_{k}| - (|b|+1)\geqslant 1,
\end{align}
 which implies that
\begin{align*}
	F_{\infty}(M) 
	&= \lim\limits_{k\to\infty} F_{k}(D^2\varphi(x_k))
	\geqslant
	\lim\limits_{k\to \infty} \frac{f_{k}(x_k)}{\Phi_{k}(x_k,|\xi_{k} +D\varphi(x_k)|)}
	\notag\\&
	\geqslant
	-\lim\limits_{k\to\infty} \frac{L}{k|\xi_k + D\varphi(x_k)|^{i(\Phi)}} = 0,
\end{align*}
where we have used \ref{c2} and \eqref{AL:6}. 

\textbf{Case 2: Sequence $\{\xi_{k}\}_{k=1}^{\infty}$ bounded }
In the case we may assume $\xi_{k}\rightarrow \xi_{\infty}$ (up to a subsequence). Therefore, for the case $|\xi_{\infty}+b|\neq 0$, in the exactly same way as in \eqref{AL:7}, we infer that $F_{\infty}(M)\geqslant 0$. Then we focus on the case $|\xi_{\infty}+b|=0$. There are two possibilities as $|b|=|\xi_{\infty}| = 0$ or $b=-\xi_{\infty}$ with $|b|, |\xi_{\infty}|>0$. In those scenarios, we prove that $F_{\infty}(M)\geqslant 0$. By contradiction suppose 
\begin{align}
	\label{AL:9}
	F_{\infty}(M)<0.
\end{align}
 From the uniformly ellipticity condition of $F_{\infty}$, the matrix $M$ has at least one positive eigenvalue. Let $\R^{n}= E\oplus Q$, where $E = \text{span}\{e_1,\ldots,e_{m}\}$ is the space consisting of those eigenvectors corresponding to positive eigenvalues of $M$.

\textbf{Case 3: $b= -\xi_{\infty}$ with $|b|,|\xi_{\infty}|>0$.} Let $\gamma>0$ and set 
\begin{align*}
	p_{\gamma}(x):= p(x) + \gamma |P_{E}(x)| = \frac{1}{2}\inner{Mx}{x} + \inner{b}{x} + \gamma |P_{E}(x)|,
\end{align*}
where $P_{E}$ stands for the orthogonal projection on $E$. Since $u_{k}\rightarrow u_{\infty}$ locally uniformly in $B_{1}$ and $p(x)$ touches $u_{\infty}(x)$ from below at the origin, for $\gamma$ small enough, $p_{\gamma}(x)$ touches $u_{k}(x)$ from below at a point $x_{k}^{\gamma}\in B_{r}$ ($B_{r}$ is a small neighborhood of the origin). Moreover, there holds that $x_{k}^{\gamma}\rightarrow x_{\infty}^{\gamma}$ for some $x_{\infty}^{\gamma}$ as $k\rightarrow \infty$. At this point we consider two scenarios: $P_{E}(x_{k}^{\gamma})=0$ for all $k\in\mathbb{N}$ (up to a subsequence) or $P_{E}(x_{k}^{\gamma})\neq 0$ for all $k\in\mathbb{N}$ (up to a subsequence).

\textit{Scenario 1: $P_{E}(x_{k}^{\gamma})=0$ for all $k\in\mathbb{N}$ (up to a subsequence).} In this scenario, first we note that 
\begin{align*}
	\bar{p}_{\gamma}(x):= \frac{1}{2}\inner{Mx}{x} + \inner{b}{x} + \gamma\inner{e}{P_{E}(x)}
\end{align*}
touches $u_{k}$ from below at $x_{k}^{\gamma}$ for every $e\in \mathbb{S}^{n-1}$. A straightforward computation gives us
\begin{align*}
	D\bar{p}_{\gamma}(x_{k}^{\gamma}) = Mx_{k}^{\gamma} + b + \gamma P_{E}(e)
	\quad\text{and}\quad 
	D^2\bar{p}_{\gamma}(x_k^{\gamma}) = M.
\end{align*}
Now we select $e\in E\cap \mathbb{S}^{n-1}$ such that $P_{E}(e)=e$. Therefore, by $u_k$ being a viscosity solution of \eqref{AL:0}, we see 
\begin{align*}
	\Phi_{k}(x_{k}^{\gamma}, |\xi_{k} + Mx_{k}^{\gamma} + b + \gamma e|)F_{k}(M) \geqslant f_{k}(x_{k}^{\gamma}).
\end{align*}

We also notice that if $Mx_{\infty}^{\gamma}=0$, then for $k$ enough large, we have 
\begin{align*}
	|\xi_{k} + Mx_{k}^{\gamma} + b| \leqslant \gamma/2
	\quad\text{and}\quad 
	3\gamma/2 \geqslant |\xi_{k} + Mx_{k}^{\gamma} + b + \gamma e| \geqslant \gamma/2.
\end{align*}
Therefore, combining the last two displays and using \ref{c2} together with $\gamma\ll 1$, we have
\begin{align*}
	\begin{split}
		F_{k}(M) & \geqslant
		\frac{f_{k}(x_{k}^{\gamma})}{\Phi_{k}(x_{k}^{\gamma},|\xi_{k} + Mx_{k}^{\gamma} + b + \gamma e| )}
		\\&
		\geqslant
		\frac{-L|f_{k}(x_{k}^{\gamma})|}{ |\xi_{k} + Mx_{k}^{\gamma} + b + \gamma e|^{s(\Phi)}}
		\geqslant
		-\frac{L}{k} \left(\frac{2}{\gamma} \right)^{s(\Phi)}.
	\end{split}
\end{align*}
Letting $k\rightarrow \infty$ in the last display, we obtain $F_{\infty}(M)\geqslant 0$. In the situation $|Mx_{\infty}^{\gamma}|>0$, we first look at the subcase $E\equiv \R^{n}$ and choose $e\in \mathbb{S}^{n-1}$ such that 
\begin{align*}
	|Mx_{\infty}^{\gamma} + \gamma P_{E}(e)| = |Mx_{\infty}^{\gamma} + \gamma e|>0.
\end{align*}  
Therefore, for $k$ large enough, we have
\begin{align}
	\label{AL:16}
	|Mx_{k}^{\gamma} + \gamma e|\geqslant \frac{1}{2}|Mx_{\infty}^{\gamma} + \gamma e| > 0
	\quad\text{and}\quad
	|\xi_{k} + b| \leqslant \frac{1}{8}|Mx_{\infty}^{\gamma} + \gamma e|.
\end{align}
 On the other hand, if $E\not\equiv \R^{n}$, then we can find $e\in \mathbb{S}^{n-1} \cap E^{\perp}$ so that 
 \begin{align*}
 	|Mx_{\infty}^{\gamma} + \gamma P_{E}(e)| = |Mx_{\infty}^{\gamma}|>0.
 \end{align*}
 Again for $k$ large enough, we have 
 \begin{align}
 	\label{AL:16_1}
 	|Mx_{k}^{\gamma}|\geqslant \frac{1}{2}|Mx_{\infty}^{\gamma}|
	\quad\text{and}\quad
	|\xi_{k} + b| \leqslant \frac{1}{8}|Mx_{\infty}^{\gamma}|.
 \end{align}

  As a consequence, using either \eqref{AL:16} or \eqref{AL:16_1}, we see 
 \begin{align*}
 	|\xi_{k} + Mx_{k}^{\gamma} + b + \gamma P_{E}(e)| > \frac{1}{4}|Mx_{\infty}^{\gamma} + \gamma P_{E}(e)| > 0.
 \end{align*}
 Again applying \ref{c2} and taking into account the last display, we have 
 \begin{align*}
 	\begin{split}
 		F_{k}(M) &\geqslant 
 		\frac{f_{k}(x_{k}^{\gamma})}{\Phi_{k}(x_{k}^{\gamma},|\xi_{k} + Mx_{k}^{\gamma} + b + \gamma P_{E}(e)| )}
 		\\&
 		\geqslant
 		-\left(\frac{L}{ |\xi_{k} + Mx_{k}^{\gamma} + b + \gamma P_{E}(e)|^{i(\Phi)}} + \frac{L}{ |\xi_{k} + Mx_{k}^{\gamma} + b + \gamma P_{E}(e)|^{s(\Phi)}} \right)|f_{k}(x_{k}^{\gamma})|
 		\\&
 		\geqslant
 		\frac{-L 4^{s(\Phi)}}{k}\left(\frac{1}{|Mx_{\infty}^{\gamma} + \gamma P_{E}(e)|^{i(\Phi)}} + \frac{1}{|Mx_{\infty}^{\gamma} + \gamma P_{E}(e)|^{s(\Phi)}}  \right).
 	\end{split}
 \end{align*}
 Again letting $k\rightarrow \infty$ in the last display, we again arrive at $F_{\infty}(M) \geqslant 0$. 
 
 \textit{Scenario 2: $P_{E}(x_{k}^{\gamma})\neq 0$ for all $k\in \mathbb{N}$ (up to a subsequence).} In this scenario, we note that $P_{E}(x)$ is smooth and convex in a small neighborhood of $x_{k}^{\gamma}$. Let us denote 
 \begin{align*}
 	\zeta_{k}^{\gamma}:= \frac{P_{E}(x_{k}^{\gamma})}{|P_{E}(x_{k}^{\gamma})|}.
 \end{align*}
 A direct computation yields
 \begin{align*}
 	D(|P_{E}(\cdot)|)(x_{k}^{\gamma}) = \zeta_{k}^{\gamma}
 	\quad\text{and}\quad
 	D^{2}(P_{E}(|\cdot|))(x_{k}^{\gamma}) = \frac{1}{|P_{E}(x_k^{\gamma})|}\left( I-\zeta_{k}^{\gamma}\otimes \zeta_{k}^{\gamma} \right).
 \end{align*}
 
Hence, with $u_{k}$ being a viscosity solution of \eqref{AL:0}, we have the following viscosity inequality

\begin{align*}
	\Phi_{k}(x_{k}^{\gamma}, |\xi_{k} + Mx_{k}^{\gamma} + b + \gamma \zeta_{k}^{\gamma}|)
	F_{k}\left(M + \frac{1}{|P_{E}(x_k^{\gamma})|}\left( I-\zeta_{k}^{\gamma}\otimes \zeta_{k}^{\gamma} \right)\right) \geqslant f_{k}(x_{k}^{\gamma}).
\end{align*}
Observing that $|\zeta_{k}^{\gamma}|=1$ and letting $e:= \zeta_{k}^{\gamma}$, we can perform the same procedure as in the first scenario of $P_{E}(x_{k}^{\gamma})=0$ by considering the cases of $Mx_{\infty}^{\gamma}=0$ and $Mx_{\infty}^{\gamma}\neq 0$. Finally, we conclude that $F_{\infty}(M)\geqslant 0$ when $b=-\xi_{\infty}\neq 0$, which contradicts to \eqref{AL:9}.

\textbf{Case 4: $b=\xi_{\infty} = 0$.} In fact, this case is much easier to handle. Since $\frac{1}{2}\inner{Mx}{x}$ touches $u_{\infty}(x)$ from below at the origin and $u_{k}\rightarrow u_{\infty}$ locally uniformly, the function 
\begin{align*}
	\hat{p}_{\gamma}(x) := \frac{1}{2}\inner{Mx}{x} + \gamma |P_{E}(x)|
\end{align*}
touches $u_{k}$ from below at a point $\hat{x}_{k}^{\gamma}\in B_{r}$ ($B_{r}$ is a small neighborhood of the origin) for $\gamma > 0$ sufficiently small. Again the sequence $\{\hat{x}_{k}^{\gamma}\}$ is uniformly bounded. As in \textbf{Case 3}, we analyze those two scenarios $P_{E}(\hat{x}_{k}^{\gamma})=0$ for all $k\in \mathbb{N}$ (up to a subsequence) and $P_{E}(\hat{x}_{k}^{\gamma})\neq 0$ for all $k\in \mathbb{N}$ (up to a subsequence). All in all, we conclude $F_{\infty}(M)\geqslant 0$ in this case.
 
Finally, taking into account all cases we have analyzed above, we have shown that $u_{\infty}$ is a viscosity supersolution of \eqref{AL:3}. In order to prove that $u_{\infty}$ is a viscosity subsolution of \eqref{AL:3}, we show that $-u_{\infty}$ is a viscosity supersolution of $\hat{F}_{\infty}(D^2h) = 0$, where $\hat{F}_{\infty}(M)= -F_{\infty}(-M)$ is uniformly $(\lambda,\Lambda)$-elliptic operator as well. Therefore, $u_{\infty}$ is a viscosity solution of \eqref{AL:3}. From the regularity results of \cite[Chap. 5]{CC1}, we see $u_{\infty}\in C^{1,\bar{\alpha}}_{\loc}(B_{3/4})$ for some $\bar{\alpha}\in (0,1)$. Moreover, $\norm{u_{\infty}}_{C^{1,\bar{\alpha}}(B_{1/2})} \leqslant c\equiv c(n,\lambda,\Lambda)$ via \eqref{AL:2}. So choosing $h:= u_{\infty}$ in \eqref{AL:1}, we have a contradiction. The proof is complete.

\end{proof}


\section{Proof of Theorem \ref{thm:mthm}}
\label{sec5}
Now we provide a proof of Theorem \ref{thm:mthm}. Let $u\in C(B_{1})$ be a viscosity solution with $\osc\limits_{B_{1}}u \leqslant 1$, $\norm{f}_{L^{\infty}(B_{1})}\leqslant \delta \ll 1$ for a constant $\delta\equiv \delta(n,\lambda,\Lambda,i(\Phi),L)$ to be determined in a moment and $\nu_0 = \nu_1 = 1$. The proof is divided into two main parts, where in the first part we shall deal with the case $i(\Phi)\geqslant 0$ and the remaining case $-1<i(\Phi)<0$ will be investigated in the second part.

\textbf{Part 1: $i(\Phi)\geqslant 0$.} Let us first fix a point $y\in B_{1/2}$ and an exponent with
\begin{align}
	\label{mthm:1}
	0< \beta < \min\left\{\bar{\alpha}, \frac{1}{1+s(\Phi)} \right\}.
\end{align}

We prove that there exist universal constants $0<r \ll 1$, $C_{0}>1$ and a sequence of affine functions 
\begin{align}
	\label{mthm:2}
	l_{k}(x):= a_k + \inner{b_{k}}{x},
\end{align}
where $\{a\}_{k=1}^{\infty}\subset \R$ and $\{b_{k}\}_{k=1}^{\infty}\subset \R^{n}$, such that for every $k\in \mathbb{N}$:
\begin{description}
	\item[(E1)\label{e1}]
	$\sup\limits_{x\in B_{r^{k}}(y)} |u(x)-l_{k}(x)| \leqslant r^{k(1+\beta)}$,
	\item[(E2)\label{e2}]
	$|a_{k}-a_{k-1}| \leqslant C_{0}r^{(k-1)(1+\beta)}$ and
	\item[(E3)\label{e3}]
	$|b_{k}-b_{k-1}| \leqslant C_{0}r^{(k-1)\beta}$.
\end{description}
We show these estimates by mathematical induction. For the simplicity, we divide the proof into several steps.

\textbf{Step 1. The basis of induction.} Without loss of generality we can assume $y=0$ by translating $x\mapsto y + \frac{1}{2}x$. Let us set 
\begin{align*}
	l_1(x):= h(0) + \inner{Dh(0)}{x},
\end{align*}
where $h$ is the approximation function coming from Lemma \ref{lem:AL} for a certain constant $\mu>0$ to be determined in a few lines. Then there exists a constant $C_0\equiv C_0(n,\lambda,\Lambda)>1$ such that 
\begin{align*}
	\norm{h}_{C^{1,\bar{\alpha}}(B_{3/8})} \leqslant C_0
	\quad\text{and}\quad
	\sup\limits_{x\in B_{r}} |h(x)-l_1(x)| \leqslant C_0r^{1+\bar{\alpha}}
\end{align*} 
for every $r\leqslant 3/8$. The triangle inequality yields
\begin{align*}
	\sup\limits_{x\in B_{r}} |u(x)-l_1(x)| \leqslant \mu + C_0r^{1+\bar{\alpha}}.
\end{align*}
We first select a universal constant $0<r\ll 1$ satisfying
\begin{align}
	\label{mthm:6}
	r^{\beta} \leqslant \frac{1}{2},\quad
	C_{0}r^{1+\bar{\alpha}} \leqslant \frac{1}{2}r^{1+\beta}
	\quad\text{and}\quad
	r^{1-\beta(1+s(\Phi))} \leqslant 1,
\end{align}
which is possible by \eqref{mthm:1}. In a sequel, we select a constant $\mu>0$ as 
\begin{align}
	\label{mthm:7}
	\mu:= \frac{1}{2}r^{1+\beta},
\end{align}
which fixes an arbitrary constant $\mu>0$ in Lemma \ref{lem:AL}. In turn, there exists a constant $\delta\equiv \delta(n,\lambda,\Lambda,i(\Phi),L,\beta)$ verifying the smallness assumption $\norm{f}_{L^{\infty}(B_{1})}\leqslant \delta$, but such a smallness assumption can be assumed without loss of generality. Therefore, to conclude this step we set 
\begin{align*}
	a_0:= 0,\quad a_1:= h(0),\quad b_0 = 0\quad\text{and}\quad
	b_1:= Dh(0).
\end{align*}
These choices with \eqref{mthm:6} and \eqref{mthm:7} verify that the estimates \ref{e1}-\ref{e3} are satisfied for $k=1$.

\textbf{Step 2: Induction process.} Now we suppose that the hypotheses of the induction have been established for $k=1,2,\ldots,m$ for $m\geqslant 1$. We show that the estimates \ref{e1}-\ref{e3} hold true for $k=m+1$. For this, we introduce an auxiliary function as 
\begin{align*}
	w_m(x):= \frac{u(r^{m}x)- l_{m}(r^{m}x)}{r^{m(1+\beta)}}.
\end{align*}
We note that $w_{m}$ solves the following equation in the viscosity sense
\begin{align*}
	\Phi_{m}(x, |r^{-m\beta}b_{m} + Dw_{m}|)F_{m}(D^{2}w_{m}) = f_{m}(x),
\end{align*}
where 
\begin{align*}
	F_{m}(M):= r^{m(1-\beta)}F(r^{(\beta-1)m}M),
\end{align*}
which is uniformly $(\lambda,\Lambda)$-operator, the function
\begin{align*}
	\Phi_{m}(x,t):= \frac{\Phi(r^{m}x,r^{m\beta}t)}{\Phi(r^{m}x,r^{m\beta})}\quad (x\in B_{1}, t>0)
\end{align*}
still satisfies the properties that the map $t\mapsto \frac{\Phi_{m}(x,t)}{t^{i(\Phi)}}$ is almost non-decreasing, the map $t\mapsto \frac{\Phi_{m}(x,t)}{t^{s(\Phi)}}$
is almost non-increasing with the same constant $L\geqslant 1$ and $\Phi_{m}(x,1)=1$ for all $x\in B_{1}$, and
\begin{align*}
	f_{m}(x):= \frac{r^{m(1-\beta)} f(r^{m}x)}{\Phi(r^{m}x,r^{m\beta})}.
\end{align*}
Using \ref{a2} and \eqref{mthm:1}, we notice that 
\begin{align*}
	\norm{f_{m}}_{L^{\infty}(B_{1})} \leqslant
	\frac{Lr^{m(1-\beta)}\norm{f}_{L^{\infty}(B_{1})}}{ r^{m\beta s(\Phi)}}
	\leqslant
	L\delta r^{m(1-(1+s(\Phi))\beta)} \leqslant L\delta.
\end{align*}

Therefore, we are in a position to apply Lemma \ref{lem:AL} to $w_{m}$. In turn, there exists a function $\bar{h}\in C^{1,\bar{\alpha}}(B_{3/4})$  such that 
\begin{align*}
	\sup\limits_{x\in B_{r}} |w_{m}(x)-\bar{h}(x)| \leqslant \mu.
\end{align*}
Arguing as in \textbf{Step 1}, we show that 
\begin{align*}
	\sup\limits_{x\in B_{r}} |w_{m}(x)-\bar{l}(x)| \leqslant r^{1+\beta},
\end{align*}
where 
\begin{align*}
	\bar{l}(x):= \bar{a} + \inner{\bar{b}}{x}\quad \text{for some}\quad \bar{a}\in \R \text{ and } \bar{b}\in \R^{n}.
\end{align*}
Denoting 
\begin{align*}
	l_{m+1}:= l_{m}(x) + r^{m(1+\beta)}\bar{l}(r^{-m}x),
\end{align*}
we see 
\begin{align*}
	\sup\limits_{x\in B_{r^{m+1}}} |u(x)-l_{m+1}(x)|\leqslant
	r^{(m+1)(1+\beta)} 
\end{align*}
and 
\begin{align*}
	|a_{m+1}-a_{m}| + r^{m}|b_{m+1}-b_{m}| \leqslant C_0r^{m(1+\beta)}.
\end{align*}
Therefore, the $(m+1)$-th step of the induction is complete.

\textbf{Step 3: Conclusion.} Once we have the existence of universal constants $0<r\ll 1$, $C_0>1$ and a sequence of affine functions in \eqref{mthm:2} verifying the estimates \ref{e1}-\ref{e3}, the remaining part of the proof is very standard, see for instance \cite{IS1,De1}. Therefore, the proof of \eqref{thm:mthm:2} is complete when $i(\Phi)\geqslant 0$.

\textbf{Part 2: $-1<i(\Phi)<0$.} Now we shall with the case of $-1< i(\Phi)<0$. Again we fix a point $y\in B_{1/2}$. Without loss of generality, we may assume $y=0$ by using the translation $x\mapsto y + \frac{1}{2}x$. Now we apply \ref{R1} of Lemma \ref{lem_dHC} in order to ensure that 
\begin{align}
	\label{mthm:21}
	[u]_{C^{0,1}(B_{3/4})} \leqslant C_{sl}
\end{align} 
for a constant $C_{sl}\equiv C_{sl}(n,\lambda,\Lambda,i(\Phi),L)$. Therefore, it can be seen that $u$ is a viscosity solution of the equation 
\begin{align*}
	\tilde{\Phi}(x,|Dv|)F(D^2v) = \tilde{f}(x)\quad
	\text{in}\quad B_{3/4},
\end{align*}
where 
\begin{align*}
	\tilde{\Phi}(x,t):= t^{-i(\Phi)}\Phi(x,t)\quad (x\in B_{1}, t>0),
\end{align*}
which satisfies the properties that the map $t\mapsto \tilde{\Phi}(x,t)$ is almost non-increasing, the map $t\mapsto \frac{\tilde{\Phi}(x,t)}{t^{s(\Phi)-i(\Phi)}}$ is almost non-increasing with constant $L\geqslant 1$, $\tilde{\Phi}(x,1)=1$ for all $x\in B_{1}$, and 
\begin{align*}
	\tilde{f}(x) = |Du(x)|^{-i(\Phi)}f(x).
\end{align*}
Using the estimate \eqref{mthm:21} together with $\norm{f}_{L^{\infty}B_{1}} \leqslant \delta \ll 1$, we see 
\begin{align*}
	\norm{\tilde{f}}_{L^{\infty}(B_{3/4})} \leqslant
	C_{sl}^{-i(\Phi)}\delta.
\end{align*}
So we are able to apply \textbf{Part 1} of the proof in order to have \ref{e1}-\ref{e3}. This means that we have the estimate \eqref{thm:mthm:2} for $-1<i(\Phi)<0$. The proof is complete.

\vspace{0.2cm}

%


\bibliographystyle{amsplain}

\begin{thebibliography}{10}

\bibitem{ART1} D. J. Ara\'ujo, G. Ricarte and E. V. Teixeira, 
\textit{Geometric gradient estimates for solutions to degenerate elliptic equations},
Calc. Var. Partial Differ. Equ \textbf{53} (2015), 605-625.


\bibitem{BD1} I. Birindelli and F. Demengel, 
\textit{Comparison principle and Liouville type results for singular fully nonlinear operators},
Ann. Fac. Sci. Toulouse Math. \textbf{13} (2004), 261-287.

\bibitem{BD2} I. Birindelli and F. Demengel, 
\textit{Regularity and uniqueness of the first eigenfunction for singular fully nonlinear operators}, 
J. Differential Equations \textbf{249} (2010), 1089-1110.

\bibitem{BD3} I. Birindelli and F. Demengel, 
\textit{Regularity for radial solutions of degenerate fully nonlinear equations}, 
Nonlinear Anal. \textbf{75} (2012), 6237-6249.

\bibitem{BD4} I. Birindelli and F. Demengel, 
\textit{$C^{1,\beta}$ regularity for Dirichlet problems associated to fully nonlinear degenerate elliptic equations}, 
ESAIM Control OPtim. Calc. Var. \textbf{20} (2014), 1009-1024.

\bibitem{BDL1} I. Birindelli, F. Demengel and F. Leoni,
\textit{$C^{1.\gamma}$ regularity for singular or degenerate fully nonlinear equations and applications},
NoDEA Nonlinear Differential Equations Appl. \textbf{26} (2019), Paper 40. 

\bibitem{BPRT1} A. C. Bronzi, E. A. Pimentel, G. C. Rampasso and E. V. Teixeira, 
\textit{Regularity of solutions to a class of variable-exponent fully nonlinear elliptic equations}, 
J. Func. Anal. \textbf{279} (2020), Paper 108781.

\bibitem{CC1} L. Caffarelli and C. Cabr\'e,
\textit{Fully nonlinear equations},
American Mathematical Society Colloquium Publications 43 (American Mathematical Society, Providence, RI, 1995).


\bibitem{CIL1} M. G. Crandall, H. Ishii and P. L. Lions, 
\textit{User's guide to viscosity solutions of second order partial differential equations},
Bull. Amer. Math. Soc. \textbf{27} (1992), 1-67.

\bibitem{DFQ1} J. D\'avila, P. Felmer and A. Quaas,
\textit{Alexandroff-Bakelman-Pucci estimate for singular or degenerate fully nonlinear elliptic equations},
C. R. Math. Acad. Sci. Paris \textbf{347} (2009), 1165-1168.

\bibitem{DFQ2} J. D\'avila, P. Felmer and A. Quaas,
\textit{Harnack inequality for singular fully nonlinear operators and some existence results},
Calc. Var. Partial Differ. Equ. \textbf{39} (2010), 557-578.

\bibitem{FRZ1} Y. Fang, V. D. Radulescu and C. Zhang, 
\textit{Regularity of solutions to degenerate fully nonlinear elliptic equations with variable exponent}, 
Bull. London Math. Soc. \textbf{0} (2021), 1-16.

\bibitem{De1} C. De Filippis,
\textit{Regularity for solutions of fully nonlinear elliptic equations with nonhomogeneous degeneracy},
Proc. Roy. Soc. Edinburg Sect. A \textbf{151} (2021), 110-132.

\bibitem{HO} P. H\"{a}st\"{o} and J. Ok, 
\textit{Maximal regularity for local minimizers of non-autonomous functionals},
J. Eur. Math. Soc. \textbf{24} (2022), no. 4, pp. 1285-1334.

\bibitem{Im1} C. Imbert,
\textit{Alexandroff-Bakelman-Pucci estimate and Harnack inequality for degenerate/singular fully non-linear equations},
J. Differential Equations \textbf{250} (2011), 1553-1574.

\bibitem{IS1} C. Imbert and L. Silvestre,
\textit{$C^{1,\alpha}$ regularity of solutions of some degenerate fully non-linear elliptic equations},
Adv. Math. \textbf{233} (2013), 196-206.

\bibitem{IS2} C. Imbert and L. Silvestre,
\textit{Estimates on elliptic equations that hold only where the gradient is large},
J. Eur. Math. Soc. \textbf{18} (2016), 1321-1338.

\bibitem{Je1} D. Jesus, 
\textit{A degenerate fully nonlinear free transmission problem with variable exponents},
Calc. Var. Partial Differ. Equ. \textbf{61} (2022), Paper 29.



\bibitem{MR1} G. Mingione and V. Radulescu,
\textit{Recent developments in problems with nonstandard growth and nonuniformly ellipticity},
J. Math. Anal. Appl. \textbf{501} (2021), 125197. 

\bibitem{SR1} J. V. Da Silva and G. Ricarte,
\textit{Geometric gradient estimates for fully nonlinear models with nonhomogeneous degeneracy and applications},
Calc. Var. Partial Differ. Equ. \textbf{59} (2020), Paper 161.


\bibitem{SV1} J. V. Da Silva and H. Vivas,
\textit{Sharp regularity for degenerate obstacle type problems: a geometric approach},
Discrete Contin. Dyn. Syst. \textbf{41} (2021), 1359-1385.




\end{thebibliography}

\end{document}